\newcolumntype{.}{D{.}{.}{1.3}}
\newcommand\dashrule{\leavevmode\xleaders\hbox{-}\hfill\kern0pt}
\def\diag{\operatorname{diag}}
\newcommand{\bc}{\bvec{c}}
\newcommand{\bd}{\bvec{d}}
\newcommand{\bg}{\bvec{g}}
\newcommand{\bs}{\bvec{s}}
\newcommand{\bu}{\bvec{u}}
\newcommand{\bv}{\bvec{v}}
\newcommand{\bx}{\bvec{x}}
\newcommand{\by}{\bvec{y}}
\newcommand{\bz}{\bvec{z}}
\newcommand{\bA}{{\bf A}}
\newcommand{\bB}{{\bf B}}
\newcommand{\bD}{{\bf D}}
\newcommand{\bF}{{\bf F}}
\newcommand{\bG}{{\bf G}}
\newcommand{\bH}{{\bf H}}
\newcommand{\bI}{{\bf I}}
\newcommand{\bK}{{\bf K}}
\newcommand{\bP}{{\bf P}}
\newcommand{\bQ}{{\bf Q}}
\newcommand{\bT}{{\bf T}}
\newcommand{\bU}{{\bf U}}
\newcommand{\bV}{{\bf V}}
\newcommand{\bX}{{\bf X}}
\newcommand{\bY}{{\bf Y}}
\newcommand{\bZ}{{\bf Z}}
\newcommand{\calI}{{\mathcal{I}}}
\newcommand{\calL}{{\mathcal{L}}}
\newcommand{\calO}{{\mathcal{O}}}
\newcommand{\bbeta}{\mbox{\boldmath $\beta$}}
\newcommand{\bdelta}{\mbox{\boldmath $\delta$}}
\newcommand{\boldeta}{\mbox{\boldmath $\eta$}}
\newcommand{\btheta}{\mbox{\boldmath $\theta$}}
\newcommand{\bsigma}{\mbox{\boldmath $\sigma$}}
\newcommand{\bGamma}{\mbox{\boldmath $\Gamma$}}
\newcommand{\bPi}{\mbox{\boldmath $\Pi$}}
\newcommand{\bSigma}{\mbox{\boldmath $\Sigma$}}
\newcommand{\bPhi}{\mbox{\boldmath $\Phi$}}
\newcommand{\bPsi}{\mbox{\boldmath $\Psi$}}
\newcommand{\1}{\mbox{\boldmath $1$}}
\newcommand{\0}{\mbox{\boldmath $0$}}
\newcommand{\be}{\begin{eqnarray}}
\newcommand{\ee}{\end{eqnarray}}
\newcommand{\matrixb}{\left[ \begin{array}}
\newcommand{\matrixe}{\end{array} \right]}
\newcommand{\tr}{\mathop{\rm tr}\nolimits}
\def\*{\circledast}
\newtheorem{lemma}{Lemma}
\newcommand{\bvec}[1]{\boldsymbol{#1}}
\newcommand{\ve}{\bvec{e}}
\newcommand{\vf}{\bvec{f}}
\def\vectorize{\operatorname{vec}}
\newcommand{\vtr}[1]{\vectorize\hspace{-.3ex}\left(#1\right)}
\newcommand{\tensor}[1]{\boldsymbol{\mathscr{\MakeUppercase{#1}}}} 
\newcommand{\tI}{\tensor{I}}
\newcommand{\tX}{\tensor{X}}
\newcommand{\tY}{\tensor{Y}}
\def\dvec{\operatorname{dvec}}
\def\blkdiag{\operatorname{blkdiag}}
\def\bigcircledast{\mathop{\mbox{\fontsize{18}{19}\selectfont $\circledast$}}}
\renewcommand{\bigodot}{\mathop{\mbox{\fontsize{18}{19}\selectfont$\odot$}}}
\definecolor{razzledazzlerose}{rgb}{1.0, 0.2, 0.8}
\definecolor{amethyst}{rgb}{0.6, 0.4, 0.8}
\definecolor{chromeyellow}{rgb}{1.0, 0.65, 0.0}
\renewcommand{\CommentSty}[1]{\fontsize{8.7}{9.8}\selectfont\textnormal{\texttt{#1}}\unskip}
\newcounter{example} 
\newenvironment{example}
{\refstepcounter{example}\vspace{10pt}\par\noindent 
\textbf{Example \theexample\ }
}
{}%
\newsavebox{\@brx}
\newcommand{\llangle}[1][]{\savebox{\@brx}{\(\m@th{#1\langle}\)}%
  \mathopen{\copy\@brx\kern-0.5\wd\@brx\usebox{\@brx}}}
\newcommand{\rrangle}[1][]{\savebox{\@brx}{\(\m@th{#1\rangle}\)}%
  \mathclose{\copy\@brx\kern-0.5\wd\@brx\usebox{\@brx}}}
\def\comment#1{}
\title{Error Preserving Correction for CPD and Bounded-Norm CPD}
\author{Anh-Huy Phan, Petr Tichavsk\'{y} and Andrzej Cichocki
\thanks{A.-H. Phan and A. Cichocki are with the Lab for Advanced Brain Signal Processing, Brain Science Institute, RIKEN, Wakoshi, Japan, e-mail: (phan,cia)@brain.riken.jp.}
\thanks{A. Cichocki is also with Systems Research Institute PAS, Warsaw, Poland, and Skolkovo Institute of Science and Technology (Skoltech), Russia}
\thanks{P.  Tichavsk{\'y} is with Institute of Information Theory and Automation, Prague, Czech Republic, email: tichavsk@utia.cas.cz.}
\thanks{The work of P. Tichavsk{\'y} was supported by the Czech Science Foundation through project No. 17--00902S.}
}
\begin{document}
\maketitle

\begin{abstract}
In CANDECOMP/PARAFAC tensor decomposition, degeneracy often occurs in some difficult scenarios, 
e.g., when the rank exceeds the tensor dimension, or when the loading components are highly collinear in   several or all modes, or when CPD does not have an optimal solution. In such the cases, norms of some rank-1 terms become significantly large, and cancel each other. This makes algorithms getting stuck in local minima, while running a huge number of iterations does not improve the decomposition.
In this paper, we propose an error preservation correction method to deal with such problem. Our aim is to seek a new tensor whose norms of rank-1 tensor components are minimised in an optimization problem, while it preserves the approximation error. An alternating correction algorithm and an all-at-one algorithm have been developed for the problem. In addition, we propose a novel CPD with a bound constraint on the norm of the rank-one tensors. The method can be useful for decomposing tensors that cannot be analyzed by traditional algorithms, such as tensors corresponding to the matrix multiplication.
\end{abstract}

\section{Introduction}

In this paper, we consider the CANDECOMP/PARAFAC tensor decomposition, which approximates a tensor $\tY$ by a sum of rank-1 tensors in the form of
\be
	\tY \approx \hat{\tY} =  \sum_{r = 1}^{R} \eta_r \, \bu^{(1)}_r \circ \bu^{(2)}_r \circ \cdots \circ \bu^{(N)}_r
\ee
where $\bU^{(n)} = [\bu^{(n)}_1, \ldots, \bu^{(n)}_R]$ are factor matrices of size $I_n \times R$. 
The tensor $\tY$ is of size $I_1 \times I_2 \times \cdots \times I_N$, and its approximation is the tensor $\hat{\tY}$ of rank-$R$.
This decomposition has found numerous applications in 
identification of independent components, signals retrieval in CDMA telecommunications, extraction of hidden components from neural data, image completion and various tracking scenarios\cite{cichocki2016tensor}.


When the loading components $\bu^{(n)}_r$ are assumed to be unit-length vectors, the weight $\eta_r$ represents the Frobenius norm of the $r$-th rank-one tensor
\be
\|\eta_r \, \bu^{(1)}_r \circ \bu^{(2)}_r \circ \cdots \circ \bu^{(N)}_r\|_F^2 = \eta_r^2 \,  \|\bu^{(1)}_r\|^2  \|\bu^{(2)}_r\|^2 \cdots  \|\bu^{(N)}_r\|^2 = \eta_r^2. \notag
\ee

In some difficult decomposition scenarios, the norms of some rank-1 terms become significantly large and cancel each other. 
This is often observed when the rank exceeds the tensor dimension, or when the loading components are highly collinear in   several or all modes (swamps) \cite{MITCHELL94}. 
Moreover, it may happen that the CP does not have an optimal solution \cite{5352186,Silva2008}, because the tensor can be arbitrarily well approximated by tensors of lower rank.
%
%
This degeneracy phenomenon is reported in the literature, e.g., in\cite{Lundy,Harshman_book,MITCHELL94,Paatero97,Paattero00,CEM:CEM1236,Harshman04,journals/siammax/Stegeman12,Kolda08,cichocki2016tensor}.
Some efforts have been made to improve stability and convergence for such the cases \cite{Rayens,Paattero00}. For example, the factor loadings can be imposed additional constraints, e.g., orthogonality \cite{Rayens,5352186}, positivity or nonnegativity \cite{Paatero97,Lim-Comon}. An alternative method is to decompose the data with a regularisation to stabilise the algorithm, e.g.,
\be
	\min \quad \|\tY - \hat{\tY}\|_F^2 + \frac{\mu}{2} \, \sum_{n} \|\bU^{(n)}\|_F^2 \,. \label{eq_cpd_smooth}
\ee
The Levenberg-Marquard method solves the above problem efficiently with a relatively low computational cost when it exploits the Khatri-Rao structures of rank-one tensors\cite{Phan_fLM}.
The damping parameter $\mu$ is adaptively adjusted, namely, it should converge to zero.

In some applications, an exact CP representation is sought. An example is decomposition of tensors corresponding to the matrix multiplication. This is one of the main challenging tasks
of theory of complexity to find a minimum number of scalar multiplications required to compute a product of two matrices of given sizes. In \cite{journals/corr/TichavskyPC16}
the first term in (\ref{eq_cpd_smooth}) is minimised while the second term is constrained to a constant. \be
\min \quad \| \tY- \hat{\tY}\|_F^2 \quad \text{s.t.} \quad \sum \|\bU_n\|_F^2 \le c \notag \,.
\ee
In this way, it is possible to find an exact decomposition of the matrix multiplication tensors for certain matrix sizes and avoid convergence to singular ``solutions" where the norm of some factor matrices converges to infinity.

In this paper, we propose a novel method to deal with this challenging problem. 
Different from the existing algorithms for this kind of tensor decomposition, our aim is to correct the rank-1 tensors if  their norm is observed to be relatively high during the tensor approximation process. 
More specifically, we seek a new tensor, $\hat{\tY}$, whose norms of rank-1 tensor components are minimal, while it is still able to explain $\tY$ at the current level of approximation error. 
Continuing the decomposition with a new tensor with a lower norm will prevent CP algorithms from degeneracy and thereby improve their convergence. 
This can be achieved by solving the following constrained CP tensor approximation 
\be
\min \quad &  f(\btheta) = \|\boldeta\|_2^2 = \sum_{r = 1}^{R} \eta_r^2 \label{eq_cp_boundnorm}\\
\text{subject to} \quad &  c(\btheta) =  \|\tY - \hat{\tY}\|_F^2 \le \delta^2 \notag ,
\ee
where $\btheta$ represents a vector of all model parameters.
We call this the Error Preserving Correction (EPC) method. 
%

In Section~\ref{sec:AU}, we derive algorithms for the above constrained nonlinear optimisation: an alternating EPC algorithm and another one based on the Sequential Quadratic Programming (SQP) method to update all the parameters at a time. In the alternating algorithm, we reformulate the optimisation in (\ref{eq_cp_boundnorm}) as linear regression sub-problems with a bound constraint for the factor matrices, which in turn can be solved in closed-form through the Spherical Constrained Quadratic Programming (SCQP). 
For the SQP algorithm, we derive fast inverse of the Hessian matrix.

In the second part of the paper, together with the EPC for CPD, we propose a novel CPD with a bound constraint on the norm of rank-1 tensors
\be
\min \quad &\| \tY - \hat{\tY} \|_F^2 \quad 
\text{s.t.} \quad & \|\boldeta \|_2^2 \le \epsilon^2  \, .\label{eq_cp_bound_lda_0}
\ee
Note that the optimization problem (\ref{eq_cp_bound_lda_0}) is dual to the problem in (\ref{eq_cp_boundnorm}) and vice versa.
This method is similar but not identical to the method of \cite{journals/corr/TichavskyPC16} with a bound on the sum of squared Frobenius norm of the factor matrices.
A novel ALS algorithm and an SQP algorithm are then derived for the bounded norm CPD.

We also present a relation between the alternating EPC correction algorithm and the ordinary ALS algorithm, and a relation between the new ALS for CPD with a bound constraint and the ALS for CPD with the Tikhonov regularization given in (\ref{eq_cpd_smooth}). 

In the Simulation section, Section~\ref{sec::simulation}, we present examples of utilisation of the proposed algorithms and methods in decomposing artificially constructed tensors, tensors corresponding to the matrix multiplication, and tensor of real-world TV rating data\cite{Lundy}.

\section{Error Preserving Correction Algorithms}\label{sec:AU}

We note that the constraint function in the optimisation (\ref{eq_cp_boundnorm})  is nonlinear with respect to all the factor matrices, but linear in parameters in one factor matrix, 
or parameters in non-overlapping partitions of different factor matrices \cite{Petr_PALS,Phan_PHALS}. A simple approach to handle this kind of constrained nonlinear optimisation is to rewrite the objective function and especially the constraint function in a linear form. This can be achieved using the alternating update scheme or the Sequential Quadratic Programming method\cite{boggs_tolle_1995,Fletcher_book}.

\comment{
\subsection{A geometric interpretation}
Before presenting the algorithms for the above constrained CPD, we briefly explain concept of the proposed method to seek a new tensor which has a smaller norm of rank-1 tensors, but preserves the approximation error. 
\begin{figure}[t]
\centering
\psfrag{x1}[lc][lc]{\scalebox{1}{\color[rgb]{0,0,0}\setlength{\tabcolsep}{0pt}\hspace{0cm}\smaller \begin{tabular}{c}$\bx_1$\end{tabular}}}%
\psfrag{x2}[lc][lc]{\scalebox{1}{\color[rgb]{0,0,0}\setlength{\tabcolsep}{0pt}\hspace{0cm}\smaller \begin{tabular}{c}$\bx_2$\end{tabular}}}%
\psfrag{x3}[lc][lc]{\scalebox{1}{\color[rgb]{0,0,0}\setlength{\tabcolsep}{0pt}\hspace{0cm}\smaller \begin{tabular}{c}$\bx_3$\end{tabular}}}%
\psfrag{x}[lc][lc]{\scalebox{1}{\color[rgb]{0,0,0}\setlength{\tabcolsep}{0pt}\hspace{0cm}\smaller \begin{tabular}{c}$\bx$\end{tabular}}}%
\psfrag{y}[lc][lc]{\scalebox{1}{\color[rgb]{0,0,0}\setlength{\tabcolsep}{0pt}\hspace{0cm}\begin{tabular}{c}$\by$\end{tabular}}}%
\psfrag{y}[lc][lc]{\scalebox{1}{\color[rgb]{0,0,0}\setlength{\tabcolsep}{0pt}\hspace{0cm}\smaller \begin{tabular}{c}$\by$\end{tabular}}}%
\psfrag{y3}[lc][lc]{\scalebox{1}{\color[rgb]{0,0,0}\setlength{\tabcolsep}{0pt}\hspace{0cm}\smaller \begin{tabular}{c}$\bar{\by}_3$\end{tabular}}}%
\psfrag{x3n}[lc][lc]{\scalebox{1}{\color[rgb]{0,0,0}\setlength{\tabcolsep}{0pt}\hspace{0cm}\smaller \begin{tabular}{c}${\bx}_3^{\star}$\end{tabular}}}%
\psfrag{x3u}[lc][lc]{\scalebox{1}{\color[rgb]{0,0,0}\setlength{\tabcolsep}{0pt}\hspace{0cm}\smaller \begin{tabular}{c}${\bx}_3^{non}$\end{tabular}}}%
\psfrag{xn}[lc][lc]{\scalebox{1}{\color[rgb]{0,0,0}\setlength{\tabcolsep}{0pt}\hspace{0cm}\smaller \begin{tabular}{c}${\bx}^{\star}$\end{tabular}}}%
\psfrag{t1}[lc][lc]{\scalebox{1}{\color[rgb]{0,0,0}\setlength{\tabcolsep}{0pt}\hspace{0cm}\smaller \begin{tabular}{l}sphere of $\bx_3$ (${\color{red}{\bullet}}$) \\ to keep $\bx$  (${\color{green}{\bullet}}$) on the {\color{cyan}{blue}} sphere \end{tabular}}}%
\psfrag{t2}[lc][lc]{\scalebox{1}{\color[rgb]{0,0,0}\setlength{\tabcolsep}{0pt}\hspace{0cm} \smaller  \begin{tabular}{l}sphere of $\bx$  (${\color{green}{\bullet}}$) around $\by$  (${\color{cyan}{\bullet}}$)\end{tabular}}}%
{\includegraphics[height=.35\linewidth, trim = 0.0cm 0cm -3cm 0cm,clip=true]{sphere_}
}
\caption{Illustration of the CP tensor approximation of a tensor $\tY$ by $R=3$ rank-1 tensors.
The ``${\color{cyan}{\bullet}}$'' point $\by = \frac{1}{R}\vtr{\tY}$ represents the given tensor, 
whereas its approximation, $\tX$, which have the same approximation error lie on the same sphere (denoted by the blue circle). Each ``${\color{green}{\bullet}}$'' point $\bx = \frac{1}{R}\vtr{\tX}=
 \frac{1}{R}(\bx_1+\bx_2+\bx_3)$
is a centroid of $R$ points, $\bx_1$, $\bx_2$ and $\bx_3$, of rank-1 tensors.
Moving the ``${\color{red}{\bullet}}$'' point $\bx_3$ on a sphere centered at $\bar{\by}_3=3 \by  - \bx_1 - \bx_2$ and with a radius of $R \|\by - \bx\|=R\delta$ (the purple circle) keeps the approximation points, $\bx$, on the same sphere, i.e., the same accuracy.
The optimal point of $\bx_3$ is $\bx_3^{\star}$, which is also the best rank-1 tensor approximation of the non-constrained point $\bx_3^{non}=\bar{\by}_3 (1 - \frac{3\delta}{\|\bar{\by}_3\|})$, and lies on the same sphere as $\bx_3^{non}$.}
\label{fig_rank3_circle}
\end{figure}
Consider a simple case when a tensor $\tY$ is approximated at an iteration by a rank-$(R=3)$ tensor ${\tX}$ composed by three rank-1 tensors $\tX_1$, $\tX_2$ and $\tX_3$
\be
	\tY \approx \tX = \tX_1 + \tX_2 + \tX_3 \notag.
\ee
We vectorize the rank-1 tensors, $\bx_r = \vtr{\tX_r}$, and assume 
that these three vectors are represented by the red dots ``${\color{red}{\bullet}}$'' in Fig.~\ref{fig_rank3_circle}, while the rank-1 tensor $\tX_3$ has a large norm. In the same figure, the centroid of the three points, the green dot ``${\color{green}{\bullet}}$'', represents $\bx = \frac{1}{R} \vtr{\tX}$, a rank-3 tensor which approximates the ``${\color{cyan}{\bullet}}$'' data point $\by = \frac{1}{R} \vtr{\tY}$. The ``${\color{green}{\bullet}}$'' point $\bx$ is on a sphere centered at the ``${\color{cyan}{\bullet}}$'' data point, $\by$, with a radius of $\delta = \frac{1}{R} \|\tY - \tX\|_F$.
It is obvious that any point on this sphere explains $\by$ with the same  approximation error, but not all the points can be represented as a centroid of three points, each of which is a vectorisation of a rank-1 tensor. We call the point the rank-1 tensor point.
Our aim is to move the ``${\color{green}{\bullet}}$'' point $\bx$ to a new point on the sphere such that it still remains a centroid of three rank-1 tensor points, but with minimum lengths. In order to achieve this, we can first move the point $\bx_3$  to a new rank-1 tensor point.  
Again it can be verified that such a new point of $\bx_3$ lies on a sphere which is centered at $\bar{\by}_3 =  3 \by  - \bx_1 - \bx_2$ 
and goes through the current point $\bx_3$, i.e., with a radius of $r_3 = \|\bar{\by}_3 - \bx_3\|  = 3 \delta$.
Without the rank-1 tensor constraint, the optimal point on the sphere is $\bx_3^{non} = \bar{\by}_3 (1 - \frac{r_3}{\|\bar{\by}_3\|})$, the ``${\color{chromeyellow}{\bullet}}$''  dot. However, this ``${\color{chromeyellow}{\bullet}}$'' point may not represent a vectorisation of a rank-1 tensor. The optimal point with the rank-1 tensor constraint must be the rank-1 tensor point closest to the ``${\color{chromeyellow}{\bullet}}$'' point $\bx_3^{non}$. Such a point is denoted by $\bx_3^{*}$ in Fig.~\ref{fig_rank3_circle}.
When $\bx_3$ is replaced by $\bx_3^{*}$, $\by$ has a new approximation point $\bx^{*}$. 
We can update $\bx_1$ and $\bx_2$ 
by the same process.
Finally, we obtain a new approximation point to $\by$, which is the centroid of rank-1 tensor points having the smallest lengths. 
The concept can be explained for tensors of higher rank, and  used to derive an algorithm which sequentially shifts one by one rank-1 tensor points, until there is no significant improvement.
In the following sections, we present more efficient algorithms for the EPC. 
}

\subsection{The alternating correction method}

In this section, we present an application of the linear regression in Appendix~\ref{sec::linreg_bounderror} in (\ref{eq_linreg_bound}) in decomposition of a tensor.

 At each iteration, we seek a new estimate of the factor matrix $\bU^{(n)}$ which reduces the objective function, while preserving the approximation error. Observing that by absorbing $\boldeta$ into the factor matrix $\bU^{(n)}$ to give ${\bU}_{\eta}^{(n)}= \bU^{(n)}  \diag(\boldeta)$, while keeping the other factor matrices $\bU^{(k)}$ fixed, $k \neq n$, the objective function becomes 
\be
	\| \boldeta \|^2 =  \| \bU^{(n)}  \diag(\boldeta) \|_F^2 =  \|{\bU}_{\eta}^{(n)} \|_F^2 \, .
\ee

The constraint is rewritten for the factor matrix $\bU_{\eta}^{(n)}$ as
\be
\|\tY - \hat{\tY}\|_F^2 &=& \|\bY_{(n)} - \bU_{\eta}^{(n)}  \bT_n^T \|_F^2  \notag \\
&=&  \|\tY\|_F^2 +  \tr(\bU_{\eta}^{(n)}  \bT_n^T \bT_n \bU_{\eta}^{(n) T}) - 2 \tr(\bY_{(n)} \, \bT_n \, \bU_{\eta}^{(n) T}) \notag \\
&=& \tr(\bU_{\eta}^{(n)} \, \bGamma_{-n}  \, \bU_{\eta}^{(n) T}) - 2 \tr(\bG_n \, \bU_{\eta}^{(n) T}) + \|\tY\|_F^2    \notag \\
&=& \| \bG_n  \bV_n \bSigma^{\frac{-1}{2}}   - \bU_{\eta}^{(n)} \bV_n \bSigma^{\frac{1}{2}} \|_F^2 + \|\tY\|_F^2 - \|\bG_n \bV_n \bSigma^{\frac{-1}{2}}\|_F^2 \label{eq_expansion_of_error}
\ee
where $\bY_{(n)}$ is the mode-$n$ matricization of $\tY$, 
 $\bT_n = \bigodot_{k \neq n} \bU^{(k)}$ is the Khatri-Rao product of all but one factor matrices, $\bG_n = \bY_{(n)} \, \bT_n$ is of size $I_n \times R$, and $\bGamma_{-n} = \bT_n^T \bT_n = \bigcircledast_{k \neq n} (\bU^{(k) T } \bU^{(k)})$ is of size $R \times R$.
 
The matrix $\bGamma_{-n}$ is assumed to be positive definite, and its EVD is denoted by $\bGamma_{-n} = \bV_n \bSigma \bV_n^T$, where $\bV_n$ is an orthonormal matrix of eigenvectors, and $\bSigma = \diag(\sigma_1 \ge \ldots \ge \sigma_R > 0)$ is a diagonal matrix of positive eigenvalues. 
Note that the matrix $\bV_n$ comprises right singular vectors associated with the singular values  $\sigma_r^{\frac{1}{2}}$ of $\bT_n$. 

Let $\bF_n = \bG_n \bV_n$.
 The optimisation problem (\ref{eq_cp_boundnorm}) becomes the linear regression with the bounded error constraint 
\be
\min \quad &   \|\bU_{\eta}^{(n)}\|_F^2 \label{eq_cp_boundnorm_un}\\
\text{subject to} \quad & \| \bF_n \bSigma^{\frac{-1}{2}}   - \bU_{\eta}^{(n)} \bV_n \bSigma^{\frac{1}{2}} \|_F^2  \le \delta_n^2 \notag 
\ee
where $\delta_n^2 = \delta^2 + \|\bF_n \bSigma^{\frac{-1}{2}}\|_F^2 - \|\tY\|_F^2$. 
According to Lemma~\ref{lem_linreg_bounderror} in Appendix~\ref{sec::linreg_bounderror}, the inequality constraint can be replaced by an equality constraint, and the problem can be solved in closed form by replacing $\bU_{\eta}^{(n)}$ by its vectorizaton and formulating it as a Spherical Constrained QP  (SCQP) in (\ref{eq_linreg_boundz}) for $I_n R$ parameters. An alternative method is that we apply the conversion for matrix variate in Appendix~(\ref{sec:sqp_matrixvariate}), and formulate an SCQP for only $R$ parameters. 
To this end, we perform a reparameterization
\be
\bZ_n &=& \frac{1}{\delta_n} \, (\bF_n \bSigma^{\frac{-1}{2}}   - \bU_{\eta}^{(n)} \bV_n \bSigma^{\frac{1}{2}})    , \\
\bU_{\eta}^{(n)} &=& (\bF_n \bSigma^{\frac{-1}{2}}  - \delta_n \, \bZ_n) \, 
\bSigma^{\frac{-1}{2}} \, \bV_n^T     , \label{eq_Ulda}
\ee
and represent the Frobenius norm of $\bU_{\eta}^{(n)}$ as
\be
	\|\bU_{\eta}^{(n)}\|_F^2 =  \|\bF_n \bSigma^{-1}\|_F^2 + \delta_n^2 \tr(\bZ_n \, \bSigma^{-1} \, \bZ_n^T) - 2  \delta_n \tr(\bF_n \bSigma^{\frac{-3}{2}} \bZ_n^T) \notag \,.
\ee
The matrix $\bZ_n$ of size $I_n \times R$  is a minimiser to an SCQP for matrix-variate 
\be
\min \quad &    \delta_n  \tr(\bZ_n \, \bSigma^{-1} \, \bZ_n^T) - 2 \tr(\bF_n \bSigma^{\frac{-3}{2}} \bZ_n^T) \label{eq_Z_SCQP}\\ 
\text{subject to} \quad & \| \bZ_n \|_F^2  = 1 \notag .
\ee

According to Lemma~\ref{lem_sqp_simplify} in Appendix~\ref{sec:scqp_identical_eig} and 
the SCQP for matrix variate in Appendix~\ref{sec:sqp_matrixvariate}, the minimiser $\bZ_n^{\star}$ can be derived 
from the minimiser $\bz^{\star} = [z_1^{\star}, \ldots, z_R^{\star}]^T$  to an SCQP of a smaller scale
\be
\min \quad &    \delta_n  \bz^T \, \bSigma^{-1} \, \bz^T - 2  \bc^T \bz \quad 
\text{s.t.} \quad    \bz^T \bz = 1 \label{eq_qps_for_z} 
\ee
where $\bc = [\ldots, \sigma_r^{\frac{-3}{2}}  \|\vf_r^{(n)}\|, \ldots]$.
For a non zero $c_r$, the $r$-th column of $\bZ_n^{\star}$ is the $r$-th column of $\bF_n$ scaled by a factor of $\frac{z_r^{\star}}{\|\vf_r^{(n)}\|}$
\be
\bz_r^{(n) \star} =  \frac{z_r^{\star}}{\|\vf_r^{(n)}\|} \, \vf^{(n)}_r \,.
\ee
Otherwise, for a zero $c_r = 0$, $\bz_r^{(n) \star}$ can be any vector of length $\|\bz_r^{(n) \star}\|^2 = ({z_r^{\star}})^{2}$.
It can also be shown that if $c_r = 0$ for $r > 1$,  then $z_r^{\star} = 0$ \cite{Phan_QPS}, hence $\bz_r^{(n)\star}$ is a zero vector.
Replacing $\bZ_n$ in (\ref{eq_Ulda}) by $\bZ_n^{\star}$ yields a new update of $\bU_{\eta}^{(n)}$.
 
 At each iteration, we update $\bU_{\eta}^{(n)}$ by a new matrix having a smaller Frobenius norm, while still preserving the approximation error $\|\tY - \hat{\tY}\|_F^2 = \delta^2$. The new estimates of $\eta_r$ and $\bu_r^{(n)}$ are respectively the $\ell_2$-norm of the vector $\bu_{\eta,r}^{(n)}$ and its $\ell_2$-normalised version
\be
\eta_r = \|\bu_{\eta,r}^{(n)}\|\, , \quad 	\bu_r^{(n)} = \frac{\bu_{\eta,r}^{(n)}}{\eta_r}   \,.
\ee

Similarly, in the next iteration, we update $\bU_{\eta}^{(n+1)}$, then normalise it to obtain the new estimate of $\bU^{(n+1)}$ and $\boldeta$. The algorithm sequentially updates all $\bU^{(n)}$, and stops when there is not any significant improvement in $\boldeta$. The Alternating Correction for Error Preservation (ACEP) is summarized in Algorithm~\ref{alg_BALS}. 
As in the ordinary ALS algorithm, the most expensive step in ACEP is the computation of $N$ matrices $\bG_n$. 
However, these terms are indeed not computed explicitly as in Step~2, but through a progressive computation for the fast computation of CP gradients \cite{Phan_fastALS}, which costs $\calO(2 R I^N)$.

 \setlength{\algomargin}{1em}
\begin{algorithm}[ht!]
\SetFillComment
\SetSideCommentRight
\CommentSty{\footnotesize}
\caption{{\tt{Alternating Error Preserving Correction for CPD (ACEP)}}\label{alg_BALS}}
\DontPrintSemicolon \SetFillComment \SetSideCommentRight
\KwIn{Data tensor $\tY$:  $(I_1 \times I_2 \times \cdots \times I_N)$,  and a rank $R$ and error bound $\delta$} 
\KwOut{$\tX = \llbracket \boldeta; \bU^{(1)}, \bU^{(2)}, \ldots, \bU^{(N)}\rrbracket $ of rank $R$ such that  
$\min \|\boldeta\|_2^2$ s.t. $\|\tY - \tX \|_F^2  \le \delta^2 $}
\Begin{
\nl Initialize $\tX = \llbracket \boldeta; \bU^{(1)}, \bU^{(2)}, \ldots, \bU^{(N)}\rrbracket$ such that $\|\tY - \tX \|_F^2  \le \delta^2 $\;
\Repeat{a stopping criterion is met}{
\For{$n = 1, 2, \ldots, N$}{
	{
		\nl Compute $\bG_n = \bY_{(n)} \, \left(\bigodot_{k \neq n} \bU^{(k)} \right)$\;
		\nl Compute EVD of $\bGamma_{-n} = \bigcircledast_{k \neq n} (\bU^{(k) T } \bU^{(k)}) = \bV_n \bSigma \bV_n^T$ and  $\bF_n = \bG_n \bV_n$\;
 		\nl Solve an SCQP: \quad  $\min \quad  \delta_n  \bz^T \, \bSigma^{-1} \, \bz^T - 2  \bc^T \bz $ \quad  s.t. $\bz^T \bz = 1$ \;   
		\nl  where  $\bc = [\ldots, \sigma_r^{\frac{-3}{2}}  \|\vf_r^{(n)}\|, \ldots]^T$, $\delta_n^2 = \delta^2 + \|\bF_n \bSigma^{\frac{-1}{2}}\|_F^2 - \|\tY\|_F^2$\;
		\nl $\bZ_n  = [\ldots,  \frac{z_r}{\|\vf_r^{(n)}\|} \vf_r^{(n)}, \ldots]$\;
		\nl $\bU_{\eta}^{(n)}  =  (\bF_n \bSigma^{\frac{-1}{2}}  - \delta_n \, \bZ_n) \, 
\bSigma^{\frac{-1}{2}} \, \bV_n^T $\tcc*{see a more compact form in (\ref{eq_Ulda_compact})}
		\nl Update $\boldeta$ and  $\bU^{(n)}$:  $\eta_r = \|\bu_{\eta,r}^{(n)}\|$, 
		$\bu_r^{(n)} = \frac{\bu_{\eta,r}^{(n)}}{\eta_r} $
	}
}
}
}
\end{algorithm}

\subsection{Relation between ACEP and ALS}

Consider the case when the first column of $\bF_n$ is non-zero, i.e., $c_1 \neq 0$, hence,  $z_1^{\star} \neq 0$, and the matrix $\bZ_n^{\star}$ can be expressed as 
\be
\bZ_n^{\star} = \bar{\bF}_n \,  \diag([\ldots, z_r^{\star}, \ldots]) \, \label{eq_zrmap}
\ee
where columns of $\bar{\bF}_n$ are $\frac{\vf^{(n)}_r}{\|\vf_r^{(n)}\|}$ for non zero columns $\vf^{(n)}_r$, and zero vectors elsewhere. 

Since $\vf_1^{(n)}$ is non-zero, the minimiser $\bz^{\star}$ to the SCQP in (\ref{eq_qps_for_z}) is given in closed-form as 
\be
z_r^{\star} 
= \frac{c_r}{\|\bc\| (s_r-\lambda) }
= \frac{\|\vf_r{(n)}\| \sigma_r^{\frac{-3}{2}} }{\|\bc\| (s_r-\lambda) }
\label{eq_zr} 
\ee
where $s_r= 1 +   \, \frac{\delta_n}{\|\bc\|}(\sigma_r^{-1} - \sigma_1^{-1})$, and $\lambda$ is a unique root in $[0,1)$ of a secular function $\sum_{r}\, ({z_r^{\star}})^2 = 1$ \cite{GANDER1989815,Phan_QPS}. 

From (\ref{eq_Ulda}), (\ref{eq_zrmap}), (\ref{eq_zr}) and the definition of $\bc$, the new update of $\bU_{\eta}^{(n)}$ can be expressed in a compact form as 
\be
\bU_{\eta}^{(n)} &=& (\bF_n \bSigma^{\frac{-1}{2}}  -  \delta_n \, \bar{\bF}_n \diag([\ldots, z_r^{\star}, \ldots])) \, 
\bSigma^{\frac{-1}{2}} \, \bV_n^T \notag \\
&=&  \bF_n \diag\left(1 - \frac{\delta_n}{\|\bc\|\sqrt{\sigma_r} (s_r - \lambda)}\right) \bSigma^{-1} \bV_n^T   \, .  \label{eq_Ulda_compact}
\ee
Observed that only when $\delta_n = 0$, the above update (\ref{eq_Ulda_compact}) boils down to the ALS update 
\be
	\bU_{\eta}^{(n)}  = \bY_{(n)} \bT_n \bGamma_{-n}^{-1}. \notag 
\ee
%

%

\subsection{Sequential quadratic programming for EPC}

Similar to the ordinary ALS algorithm for the CP decomposition, the ACEP algorithm updates one factor matrix per iteration; hence, it may require many iterations to converge.
It might be useful to consider an ``all-at-once'' algorithm for the EPC, which would be analog to the nonlinear algorithms for CPD and can be combined with them to improve stability and performance of the whole computation.
The algorithm follows the idea of the sequential quadratic programming \cite{boggs_tolle_1995,Fletcher_book}.
The objective function which represents sum of Frobenius norms of rank-1 tensors is rewritten as 
\be
f(\btheta) &=& \sum_{r = 1}^R \|\bu^{(1)}_r \circ \bu^{(2)}_r \circ \cdots \circ \bu^{(N)}_r\|_F^2 \notag \\
&=& \sum_{r = 1}^R   \prod_{n = 1}^{N}  \, (\bu^{(n) T}_r \, \bu^{(n)}_r)    \, , \label{eq_cp_boundnorm2}
\ee
and the optimisation problem in (\ref{eq_cp_boundnorm}) is stated as 
\be
\min \quad f(\btheta)  \quad \text{s.t.} \quad c(\btheta) =  \|\tY - \hat{\tY}(\btheta)\|_2^2 \le \delta^2  \, \label{eq_cp_boundnorm2b}
\ee
where $\btheta = [\vtr{\bU^{(1)}}^T, \vtr{\bU^{(2)}}^T, \ldots, \vtr{\bU^{(N)}}^T]^T$.

As the derivation of the ACEP algorithm, we alternatively minimise an equivalent problem with an equality constraint
\be
\min \quad f(\btheta)  \quad \text{s.t.} \quad c(\btheta) =  \|\tY - \hat{\tY}(\btheta)\|_2^2 = \delta^2  \, .\label{eq_cp_boundnorm2_equ}
\ee 
In order to achieve this, we first construct the Lagrangian function
\be
\calL(\btheta, \lambda)  = f(\btheta) + \lambda \,  (c(\btheta) - \delta^2 ), \label{eq_augLagragian_boundnorm1}
\ee
then approximate $\calL(\btheta^{(k)} + \bd_{\theta}, \lambda^{(k)}  +  d_{\lambda})$ by a second order Taylor expansion around $(\btheta^{(k)}, \lambda^{(k)})$ 
\be
\calL(\btheta^{(k)} + \bd_{\theta}, \lambda^{(k)}  + d_{\lambda}) &\approx & \calL(\btheta^{(k)}, \lambda^{(k)}) + (\nabla \calL(\btheta^{(k)}, \lambda^{(k)}))^T   \, \bd    +  
\frac{1}{2} \,  \bd^T \, [ \nabla^{2} \calL(\btheta^{(k)}, \lambda^{(k)}) ]  \, \bd  \, , \notag 
\ee
where $\bd = \left[\bd_{\theta}^T, d_{\lambda} \right]^T$ represents the vector of increment.
This gives an approximation to the gradient $\nabla \calL(\btheta^{(k)} + \bd_{\theta}, \lambda^{(k)}  + d_{\lambda})$
\be
\nabla{\calL}(\btheta^{(k)} + \bd_{\theta}, \lambda^{(k)}  + d_{\lambda}) \approx  \nabla \calL(\btheta^{(k)}, \lambda^{(k)}) +  
[ \nabla^{2} \calL(\btheta^{(k)}, \lambda^{(k)}) ]   \, \bd \,. \notag 
\ee 
By setting the gradient to zero, we obtain the Newton iteration update as the solution to 
\be
[ \nabla^{2} \calL(\btheta^{(k)}, \lambda^{(k)}) ]   \,  \bd   = - \nabla \calL(\btheta^{(k)}, \lambda^{(k)})  \notag 
\ee
or
\be
\left[\begin{array}{cc} 
\bH_{\lambda^{(k)}}(\btheta^{(k)})  &  \bg_c(\btheta^{(k)}) \\
 \bg_c^T(\btheta^{(k)})  & 0 \\
\end{array} \right] \, 
\left[\begin{array}{c} \bd_{\theta} \\ d_{\lambda} \end{array}\right] 
=  - \left[\begin{array}{c} 
\bg_f(\btheta^{(k)}) + \lambda^{(k)} \bg_c(\btheta^{(k)}) \\
c(\btheta^{(k)})  - \delta^2
\end{array}\right] \label{eq_SQP_up1}
\ee
where $\bg_f(\btheta)$ and $\bg_c(\btheta)$ are gradients of the objective and constraint functions with respect to $\btheta$. 
The Hessian $\bH_{\lambda^{(k)}}(\btheta^{(k)})$ is computed as  
\be
\bH_{\lambda^{(k)}}(\btheta^{(k)}) &=& \nabla^{2} f(\btheta^{(k)})+  \lambda^{(k)} \, \nabla^{2} c(\btheta^{(k)})  . \label{eq_linsys_augLagrangian}
\ee
The solution in (\ref{eq_SQP_up1}) is also minimiser to the following QP subproblem 
\be
\min \quad & \frac{1}{2}  \bd_{\theta}^T  \, \bH_{\lambda^{(k)}}(\btheta^{(k)}) \, \bd_{\theta} +    \bg_f^T(\btheta^{(k)}) \, \bd_{\theta}  \label{eq_sqp_2} \\
 \quad \text{s.t.} \quad & c(\btheta^{(k)})  +   \bg_c^T(\btheta^{(k)}) \, \bd_\theta  = \delta^2 \notag \, .
\ee
Solving either (\ref{eq_SQP_up1}) or the QP in (\ref{eq_sqp_2}) gives us the new search direction and the Lagrange multiplier, $\btheta^{(k+1)} = \btheta^{(k)} + \bdelta_{\theta}$ and $\lambda^{(k+1)} = \lambda^{(k)} + \delta_{\lambda}$.
Further details on the SQP method can be found in \cite{boggs_tolle_1995,Fletcher_book}.

We next derive the Hessian and gradient of the Lagrangian, then present a fast method to solve the linear system in (\ref{eq_SQP_up1}). A similar method was used in \cite{Phan_fLM,PetrfLMnr6}.

Following the Hessian of the objective in (\ref{eq_Hessian_obj}) 
in Appendix~\ref{sec:gradHess_ftheta} and the constraint function  in (\ref{eq_Hessian_cp}) in Appendix~\ref{sec::gradHessian_c_theta}, the Hessian $\bH_{\lambda}$ can be expressed as a rank-$R^2$  adjustment form as 
\be
\bH_{\lambda} &=& \bH_f + \lambda \, \bH_c 	\notag \\
&=& \tilde{\bG} 
+ \tilde{\bZ} \, \bPsi_{\lambda} \, \tilde{\bZ}^T
\ee
where $\bPsi_{\lambda}  = \bP_{R,R} \dvec({\bGamma_{\lambda}})$,  $\bP_{R,R}$ is a permutation matrix which maps $\vtr{\bX_{R\times R}} = \bP_{R,R} \vtr{\bX_{R\times R}^T}$, and $\tilde{\bG}$ is a block diagonal matrix of  square matrices, $\tilde{\bG}_n$, of size $I_n R \times I_n R$
\be
\tilde{\bG}_n &=& \bGamma_{\lambda}^{(-n)} \, \otimes \bI_{I_n}
- \tilde{\bZ}_{n} \, \bPsi_{\lambda} \, \tilde{\bZ}_{n}^T \,, 
\ee
$\bGamma_{\lambda}$ and $\bGamma_{\lambda}^{(-n)}$ are square matrices of size $R\times R$ respectively adjusted from the matrices $\bGamma$ and $\bGamma_{-n}$ as 
\be
\bGamma_{\lambda}^{(-n)}(r,s) &=& 
\begin{cases}
  \lambda \, \bGamma_{-n}(r,s)  \, ,	\quad r \neq s \, ,\\
  (  \lambda+1) \, \bGamma_{-n}(r,r)  \, , 	\quad r = s\,,
  \end{cases} \\
\bGamma_{\lambda}(r,s) &=& 
\begin{cases}
  \lambda \, \bGamma(r,s)   \,,	\quad r \neq s\, ,\\
  (\lambda+2) \, \bGamma(r,r)  \,, 	\quad r = s.
  \end{cases} 
\ee
Here $\dvec(\bK) = \diag(\vtr{\bK})$.
With the condition $R < \sum_{n} I_n$, the matrix $\tilde{\bZ}$ is a tall matrix of size $R(\sum_{n} I_n) \times R^2$.
$\tilde{\bG}$ is a block diagonal matrix, inverse of this matrix is efficiently computed through the inverses of its block matrices $\tilde{\bG}_n$. However, since the Hessian matrix is rank-deficient, we suggest to increase its diagonal by a sufficiently large $\mu$ to make the smallest eigenvalue positive. This is similar to adding the term $\mu \|\btheta\|_F^2$ into the objective function $f(\btheta)$
\be
\min \quad f(\btheta) + \mu \|\btheta\|_2^2 \qquad \text{s.t.} \quad c(\btheta)  \le \delta^2 \notag  
\ee
or minimising the problem (\ref{eq_cp_boundnorm2b}) with an additional constraint $\|\btheta\|_2^2 \le \alpha^2$
\be
\min \quad f(\btheta)  \qquad \text{s.t.} \quad c(\btheta)  \le \delta^2  , \quad  \|\btheta\|_2^2  \le \alpha^2 \, \notag  \,.
\ee

Since shifting eigenvalues does not change the low-rank adjustment structure of the Hessian, following \cite{Phan_fLM,PetrfLMnr6}, we can invert the damped Hessian $\bH_{{\lambda},\mu} = \bH_{\lambda} + \mu \bI$ as follows 
\be
\bH_{\lambda,\mu}^{-1} &=& (\tilde{\bG}_\mu  + \tilde{\bZ} \, \bPsi_{\lambda} \, \tilde{\bZ}^T)^{-1} \notag \\
&=& \tilde{\bG}_\mu^{-1}  - \tilde{\bG}_\mu^{-1} \tilde{\bZ} \,  (\bPsi_{\lambda}^{-1} + \tilde{\bZ}^T \tilde{\bG}_\mu^{-1}   \tilde{\bZ})^{-1} \,  \tilde{\bZ}^T\tilde{\bG}_\mu^{-1}
\label{eq_inverse_H}
\ee
where the block diagonal matrix $\tilde{\bG}_\mu = \tilde{\bG} + \mu \bI$ and 
\be
\tilde{\bG}_\mu^{-1} &=& \blkdiag(\ldots, (\tilde{\bG}_n + \mu \bI)^{-1}, \ldots)  \notag \, ,\\
(\tilde{\bG}_n + \mu \bI)^{-1} &=& ((\bGamma_{\lambda}^{(-n)} + \mu \bI) \, \otimes \bI_{I_n}
- \tilde{\bZ}_{n} \, \bPsi_{\lambda} \, \tilde{\bZ}_{n}^T )^{-1}\,.
\label{eq_inverse_Gamma_n_mu}
\ee
If $R < I_n$, the matrices $\tilde{\bZ}_{n}$ are tall and of size $RI_n \times R^2$, the inversion $(\tilde{\bG}_n + \mu \bI)^{-1}$ can  be performed even more efficiently as
\be
(\tilde{\bG}_n + \mu \bI)^{-1} &=& \bPhi_{n} \otimes \bI_{I_n}  -  (\bPhi_{n} \otimes \bI_{I_n})\, \tilde{\bZ}_{n} (\bPsi_\lambda^{-1}  +  \tilde{\bZ}_{n}^T\,  (\bPhi_{n} \otimes \bI_{I_n})\ \,  \tilde{\bZ}_{n})^{-1}\, \tilde{\bZ}_{n}^T\, (\bPhi_{n} \otimes \bI_{I_n}) \notag \\
&=& \bPhi_{n} \otimes \bI_{I_n}  -  (\bPhi_{n} \otimes \bU^{(n)}) \dvec(\1\oslash {\bGamma_n})   \notag \\ && \quad (\bPsi_\lambda^{-1}  +  \dvec(\1\oslash {\bGamma_n})  (\bPhi_{n} \otimes \bGamma_n)\, \dvec(\1\oslash {\bGamma_n}))^{-1}\,   \notag \\ && \quad\dvec(\1\oslash {\bGamma_n})  \, (\bPhi_{n} \otimes \bU^{(n) T})  \notag \\
&=& \bPhi_{n} \otimes \bI_{I_n}  -  (\bPhi_{n} \otimes \bU^{(n)})     ( \bP_{R,R} \dvec({\bGamma_{n}^2\oslash\bGamma_{\lambda}}) +   \bPhi_{n} \otimes \bGamma_n)^{-1}  \, (\bPhi_{n} \otimes \bU^{(n) T})  \label{eq_inverse_Gamma_n_mu_lowcost}
\ee
where $\bPhi_{n}  = (\bGamma_{\lambda}^{(-n)} + \mu \bI)^{-1}$, and $\oslash$ represents the Hadamard element-wise division.
The last expression is obtained by using the following identity  whose proof is provided in Appendix~\ref{sec_Prr_identity}
\be
\bP_{R,R} \, \dvec(\bGamma_n) =  \dvec(\bGamma_n) \, \bP_{R,R} \, . \label{eq_Prr_identity}
\ee
%

Now, by exploiting the rank-1 expansion, and replacing $\bH_{\lambda}$ by the damped $\bH_{\lambda,\mu}$, 
inverse of the Hessian $\nabla^{2} \calL$ in (\ref{eq_SQP_up1}) can be expressed as 
\be
\left[\begin{array}{cc} 
\bH_{\lambda,\mu}  &  \bg_c \\
 \bg_c  & 0 \\
\end{array} \right]^{-1} = \left[\begin{array}{cc} 
\bH_{\lambda,\mu}^{-1}  \\
  & 0 \\
\end{array} \right]  - 
\frac{1}{\bg_c^{T}  \bH_{\lambda,\mu}^{-1} \bg_c}
\left[\begin{array}{cc} 
\bH_{\lambda,\mu}^{-1} \bg_c \\
-1
\end{array} \right]  \, \left[\begin{array}{cc} 
\bg_c^T \bH_{\lambda,\mu}^{-1}  & 
-1
\end{array} \right] \label{eq_invH_Lagrangian}\,
\ee  
then we apply the inversion in (\ref{eq_inverse_H}) with (\ref{eq_inverse_Gamma_n_mu}) or with (\ref{eq_inverse_Gamma_n_mu_lowcost}) to compute the inverse of the Hessian. 

%
From (\ref{eq_SQP_up1}), new estimates of the Lagrange multiplier $\lambda^{(k+1)}$ and search direction $\bd_{\theta}$ are given by
\be
	 \lambda^{(k+1)} &=& \frac{c(\btheta^{(k)}) - \delta^2 - \bg_c^T(\btheta^{(k)})  \, \bH_{\lambda,\mu}^{-1}(\btheta^{(k)})   \bg_f(\btheta^{(k)})  }{\bg_c^T(\btheta^{(k)})  \, \bH_{\lambda,\mu}^{-1}(\btheta^{(k)}) \, \bg_c(\btheta^{(k)}) }     \,  ,\\
	\bd_{\theta} &=& -\bH_{\lambda,\mu}^{-1}(\btheta^{(k)})  (\bg_f(\btheta^{(k)})  +  \lambda^{(k+1)} \, \bg_c(\btheta^{(k)}) )    \, ,
\ee
and we obtain a new estimate of parameters $\btheta^{(k+1)} = \btheta^{(k)} + \bd_{\theta}$. In addition, the loading components of rank-1 tensors are then normalised to have equivalent norm after each iteration, i.e.,
\be
	\bu_r^{(n)} \leftarrow  \bu_r^{(n)} \frac{\eta_r^{1/N}}{\|\bu_r^{(n)}\|}
\ee
where $\eta_r = \prod_n   \|\bu_r^{(n)}\|$.

%
%

\subsection{Implementation}\label{sec:implement_cp_bound}

A requirement for the EPC methods of rank-1 tensors is that the initial point is feasible, i.e., obeys the constraint $\| \tY - \hat{\tY}\|_F  \le \delta$. 
In practice, we apply the correction method after fitting the tensor following the ordinary CP model. By this way, the current estimated tensor is a feasible point, where $\delta = \| \tY - \hat{\tY}\|_F$ is the current approximation error. Since the ACEP algorithm solves sub-problems in closed-form, the new update points are always in a feasible region, i.e., $c(\theta) \le \delta^2$, while the objective function decreases sequentially or at least is kept to not increase. However, unlike the ACEP, the SQP algorithm solves the sub-problems which approximate the main problem, even when provided an initial feasible point, this algorithm still needs to seek a feasible region in some first iterations. Hence, newly updated points may not remain in the feasible region. Our experience is that we execute the ACEP in some small number of iterations, then switch to the SQP or Interior Point (ITP) algorithm. 

In practice, one can gradually increase the bound, $\delta$, e.g., by a factor of 1.1, until the norm of rank-1 tensors attains the desired value. The CPD with EPC can be  implemented as in 
Algorithm~\ref{alg_CP_with_normcorrection}.

 \setlength{\algomargin}{1em}
\begin{algorithm}[t!]
\SetFillComment
\SetSideCommentRight
\CommentSty{\footnotesize}
\caption{{\tt{CPD with EPC}}\label{alg_CP_with_normcorrection}}
\DontPrintSemicolon \SetFillComment \SetSideCommentRight
\KwIn{Data tensor $\tY$:  $(I_1 \times I_2 \times \cdots \times I_N)$,  and a rank $R$ and error bound $\delta$} 
\KwOut{$\tX = \llbracket \boldeta; \bU^{(1)}, \bU^{(2)}, \ldots, \bU^{(N)}\rrbracket $ of rank $R$}
\SetKwFunction{cpd}{CPD} 
\SetKwFunction{bals}{bals} 
\Begin{
\nl Initialize $\tX_0 = \llbracket \boldeta_0; \bU_0^{(1)}, \bU_0^{(2)}, \ldots, \bU_0^{(N)}\rrbracket$ \;
\Repeat{a stopping criterion is met}{
\nl \If {$\|\boldeta_{k-1}\|_2^2 \ge \delta^2$ } {
	\nl Solve (\ref{eq_cp_boundnorm}) to find a tensor $\tX_{k}$ such that $\min \quad \|\boldeta_{k}\|_2^2$  \quad s.t. \,\,$\|\tY - \tX_{k}\|_F \le \|\tY - \tX_{k-1}\|_F$\;
	}
\nl \Else
{\nl	 Seek $\tX_k$ such that  $\min \|\tY - \tX_k\|_F^2$ with the initial $\tX_{k-1}$
}
}
}
\end{algorithm}

%

\section{Canonical Polyadic Tensor Decomposition with Bound on Norm of Rank-1 Tensors}\label{sec:cp_bounded_rank1norm} 

In contrast to the CP decomposition which seeks a tensor approximation having a minimal norm of rank-1 tensors, in this section we consider a constrained tensor approximation, in which the norm of rank-1 tensors is bounded
\be
\min \quad &\| \tY - \hat{\tY} \|_F^2 \quad 
\text{s.t.} \quad & \|\boldeta \|_2^2 \le \epsilon^2  \, .\label{eq_cp_bound_lda}
\ee

\subsection{Alternating update algorithm}\label{r[fv}

Similar to the previous section, we can absorb $\boldeta$ into a factor matrix $\bU^{(n)}$ and rewrite the above optimization problem as  
\be
\min \quad &\| \tY - \hat{\tY} \|_F^2 \quad 
\text{s.t.} \quad & \|\bU_\eta^{(n)}\|_F^2 \le \epsilon^2  \, \label{eq_cp_bound_Ulda}\,,
\ee
or as a Quadratic programming with a bounded norm for $\bU_{\eta}^{(n)}$
\be
\min \quad & \tr(\bU_{\eta}^{(n)}  \bGamma_{-n} \bU_{\eta}^{(n) T}) - 2 \tr(\bG_n \, \bU_{\eta}^{(n) T})     \, \label{eq_cp_bound_Ulda_2}\\
\text{s.t.} \quad & \|\bU_\eta^{(n)}\|_F^2 \le \epsilon^2  \, \notag .
\ee
The above equation is followed the expansion of the Frobenius norm in (\ref{eq_expansion_of_error}). Next we convert the above matrix-variate QP to the one for a vector of length $R \times 1$. 

Let $\bGamma_{-n} = \bV_n \bSigma \bV_n^T$ be the eigenvalue decomposition of $\bGamma_{-n}$. Then denote $\bF_n = \bG_n \bV_n$, and $\bZ_n = \bU_{\eta}^{(n)} \bV_n $. The optimization in (\ref{eq_cp_bound_Ulda_2}) is transformed into 
\be
\min \quad & \tr(\bZ_n \bSigma_n \bZ_n^T) - 2 \tr(\bF_n \, \bZ_n^T)     \, \label{eq_cp_bound_Z}\\
\text{s.t.} \quad & \|\bZ_n\|_F^2 \le \epsilon^2  \, \notag .
\ee
Similar to (\ref{eq_Z_SCQP}) and according to Lemma~\ref{lem_sqp_simplify} in Appendix~\ref{sec:scqp_identical_eig}, the minimiser $\bZ_n^{\star}$ to the matrix-variate QP in (\ref{eq_cp_bound_Z}) can be derived from the minimiser $\bz^{\star}$ to the following constrained QP
\be
\min \quad & \bz^T \bSigma \bz - 2 \bc^T  \, \bz  \, \quad  
\text{s.t.} \quad \bz^T\bz  \le \epsilon^2  \, \label{eq_cp_bound_Ulda_b}
\ee
where the vector $\bc = [\ldots, \|\vf_r^{(n)}\|_2, \ldots]^T$ comprises the norm of columns of $\bF_n$. We note that the above  QP problem with an inequality constraint can be solved in closed-form.

If $\sum_{r = 1}^{R} \frac{c_r^2}{\sigma_r^2} \le \epsilon^2$, the minimiser to (\ref{eq_cp_bound_Ulda_b}) is simply the point  
\be
\bz^{\star} = \bc \oslash \bsigma\,. \label{eq_bals_z_case1}
\ee
This case often happens when the current parameter point is in a feasible set, and the bound is set to a relatively high value. 

Otherwise, $\bz^{\star}$ is a minimiser to the QP over a sphere which again can also be solved in closed-form\cite{Phan_QPS}
\be
\min \quad & \bz^T \bSigma \bz - 2 \bc^T  \, \bz  \, \quad  
\text{s.t.} \quad \bz^T\bz  = \epsilon^2  \, \label{eq_cp_bound_Ulda_sphere} \,.
\ee

The proposed algorithm works in the same manner as the ordinary ALS algorithm. We call this the BALS algorithm.

\subsection{Relation between BALS and ALS with smoothness constraint}

We consider the case when the last column of $\bF_n$ is non zero, i.e., $c_R \neq 0$. Assuming that the eigenvalues of $\bGamma_{-n}$ are ordered in the descending order, i.e., $\sigma_1 \ge \sigma_2 \ge \cdots \ge \sigma_R>0$, the SCQP in (\ref{eq_cp_bound_Ulda_sphere}) has a minimiser given in form of 
\be
\bz^{\star} &=&  [\ldots, \frac{c_r}{ \sigma_r  - \widetilde{\lambda} } , \ldots ]  
\ee
where $  \widetilde{\lambda} $ is a unique solution in $[\sigma_R - \|\bc\|,  \sigma_R - \|\bc\| (1 - 1/\epsilon)]$ of the secular equation which can be solved in closed-form\cite{GANDER1989815,Phan_QPS}
\be
{\bz^{\star}}^T\, \bz^{\star}  = \sum_r  \frac{c_r^2}{(\sigma_r - \widetilde{\lambda})^2}= \epsilon^2 \, .\notag
\ee
The minimiser in (\ref{eq_bals_z_case1}) is a particular case of the above when $\widetilde{\lambda}  = 0$.
Hence, from the conversion of QP for matrix-variate in Appendix~\ref{sec:sqp_matrixvariate} and Lemma~\ref{lem_sqp_simplify}, 
we can write $\bZ_n^{\star}$ as 
\be
\bZ_n^{\star} =  {\bF}_n\,  \diag( \ldots, (\sigma_r - \widetilde{\lambda})^{-1}, \ldots )\, .
\ee
Replacing this into $\bU_{\eta}^{(n)}$, we obtain an update rule   
\be
\bU_{\eta}^{(n)}&=& \bG_n \,\bV_n  \diag( \ldots, (\sigma_r - \widetilde{\lambda})^{-1}, \ldots ) \bV_n^T \,
\notag \\ 
&=& \bG_n (\bGamma_{-n} - \widetilde{\lambda} \, \bI_R )^{-1}  \label{eq_bals_nonzero_cR} \,.
\ee
The above update rule (\ref{eq_bals_nonzero_cR}) is indeed similar to the ALS update rule derived for the objective function in (\ref{eq_cpd_smooth}) with $\mu = - \widetilde{\lambda}$. Here,  we show a relation between the regularization parameter $\mu$ and the bound $\epsilon$. In the decomposition in (\ref{eq_cpd_smooth}), the regularisation or damping parameter $\mu$ is often fixed or adjusted to keep the cost function non-increasing. In our algorithm, the parameter $\widetilde{\lambda}$ is a root of a secular equation, and is updated in each iteration. 

\subsection{Sequential quadratic programming method}

Similar to the SQP algorithm for the optimization problem in (\ref{eq_cp_boundnorm2b}), we relax the unit-length constraints of the loading components and develop an SQP algorithm for the CPD with bounded rank-1 tensor norm 
\be
\min \quad c(\btheta)  \quad \text{s.t.} \quad f(\btheta) \le \epsilon^2 \, \label{eq_cp_bound_rank1norm}
\ee
where the functions $f(\btheta)$ and $c(\btheta)$ exchange their roles in the optimisation (\ref{eq_cp_boundnorm2b}).
The Lagrangian to the above constrained optimisation is given by 
\be
\calL(\btheta, \lambda) =  c(\btheta) + \lambda \, (f(\btheta)  - \epsilon^2) \,.
\ee
Similar to the Lagrangian in (\ref{eq_augLagragian_boundnorm1}), 
the new search direction is minimiser to the QP subproblem
\be
\min \quad & \frac{1}{2}  \bd_{\theta}^T  \, \widetilde{\bH}_{\lambda^{(k)}}(\btheta^{(k)}) \, \bd_{\theta} +   \bg_{\lambda^{(k)}}^T(\btheta^{(k)}) \, \bd_{\theta}  \label{eq_sqp_bcpd} \\
 \quad \text{s.t.} \quad & f(\btheta^{(k)})  +   \bg_f^T(\btheta^{(k)})\,\bd_\theta  \le \epsilon^2 \notag \, , 
\ee
where 
\be
\widetilde{\bH}_{\lambda}(\btheta)  &=&  \nabla^2 c(\btheta)  +  \lambda \, \nabla^2 f(\btheta)  \notag \, ,\\
\bg_{\lambda}(\btheta) &=& \bg_c(\btheta)  + \lambda  \bg_f(\btheta)  \notag  \,.
\ee
If the non-constrained solution, $-\widetilde{\bH}_{\lambda}^{-1} \,  \bg_{\lambda}$, is in the feasible set, it is the minimiser and $d_{\lambda} = 0$. Otherwise, we solve the QP with an equality constraint. Similar to (\ref{eq_sqp_2}) it leads to find the solution to a system 
\be
\left[\begin{array}{cc} 
\widetilde{\bH}_{\lambda}  &  \bg_f \\
 \bg_f^T & 0 \\
\end{array} \right] \, 
\left[\begin{array}{c} \bd_{\theta} \\ d_{\lambda} \end{array}\right] 
=  - \left[\begin{array}{c} 
\bg_c + \lambda  \bg_f  \\
f - \epsilon^2
\end{array}\right] \label{eq_SQP_bcdp_up2}
\ee
where the Hessian $\widetilde{\bH}_{\lambda} = \lambda \, \bH_{1/{\lambda}} $ shares the low-rank adjustment structure of $\bH_{\lambda}$.
This finally leads to the compact update rules for the Lagrange multiplier and the search direction 
\be
	 \lambda^{(k+1)} &=& \frac{f(\btheta^{(k)}) - \epsilon^2- \bg_f^T(\btheta^{(k)})  \, \widetilde{\bH}_{\lambda,\mu}^{-1}(\btheta^{(k)})   \bg_c(\btheta^{(k)})  }{\bg_f^T(\btheta^{(k)})  \, \widetilde{\bH}_{\lambda,\mu}^{-1}(\btheta^{(k)}) \, \bg_f(\btheta^{(k)}) }     \,  ,\\
	\bd_{\theta} &=& -\widetilde{\bH}_{\lambda,\mu}^{-1}(\btheta^{(k)})  (\bg_c(\btheta^{(k)})  +  \lambda^{(k+1)} \, \bg_f(\btheta^{(k)}) )    \, .
\ee


\section{Numerical Results}\label{sec::simulation}

\begin{example}[CP with and without EPC of rank-1 tensors]\label{ex1}

We considered tensors of size $I \times I \times I$ and rank-$R = 5$, where $I = R-1$. The first four loading components were highly collinear, with $\bu^{(n) T}_r \bu^{(n)}_s  = 0.99$ for all $n$ and $1\le r \neq s \le R-1$. All components are unit-length vectors.
The factor matrices with specific correlation coefficients were generated using the subroutine $``\tt{gen\_matrix}''$  in the TENSORBOX\cite{Phan_Tensorbox}.

We decomposed the tensor using the fast Levenberg-Marquard (fLM) algorithm\cite{Phan_fLM}. The factor matrices were initialized by a matrix $[\bI_4, \1_4]$.
The algorithm did not converge even after 3000 iterations, while the norm of rank-1 tensors increased dramatically as shown in Fig.~\ref{fig_bals1}.
For this case of the decomposition, we applied the EPC after 10, 20, 50, 100 iterations of the algorithm and when the estimation stopped due to degeneracy, i.e., we sought a tensor which had a minimal norm of rank-1 tensors while still preserving $\|\tY - \hat{\tY}\|_F \le \delta_k$, where $\delta_k$ was the approximation error at the iteration $k$-th.
The results illustrated in Fig.~\ref{fig_bals1} shows that the correction lowered the norm of rank-1 tensors to 5, and helped the fLM to fully explain the tensor with the relative errors $\frac{\|\tY  - \hat{\tY}\|_F}{\|\tY\|_F} \le 10^{-7}$.



\begin{figure}[t]
\centering
\subfigure[]{\includegraphics[width=.47\linewidth, trim = 0.0cm 0cm 0cm 0cm,clip=true]{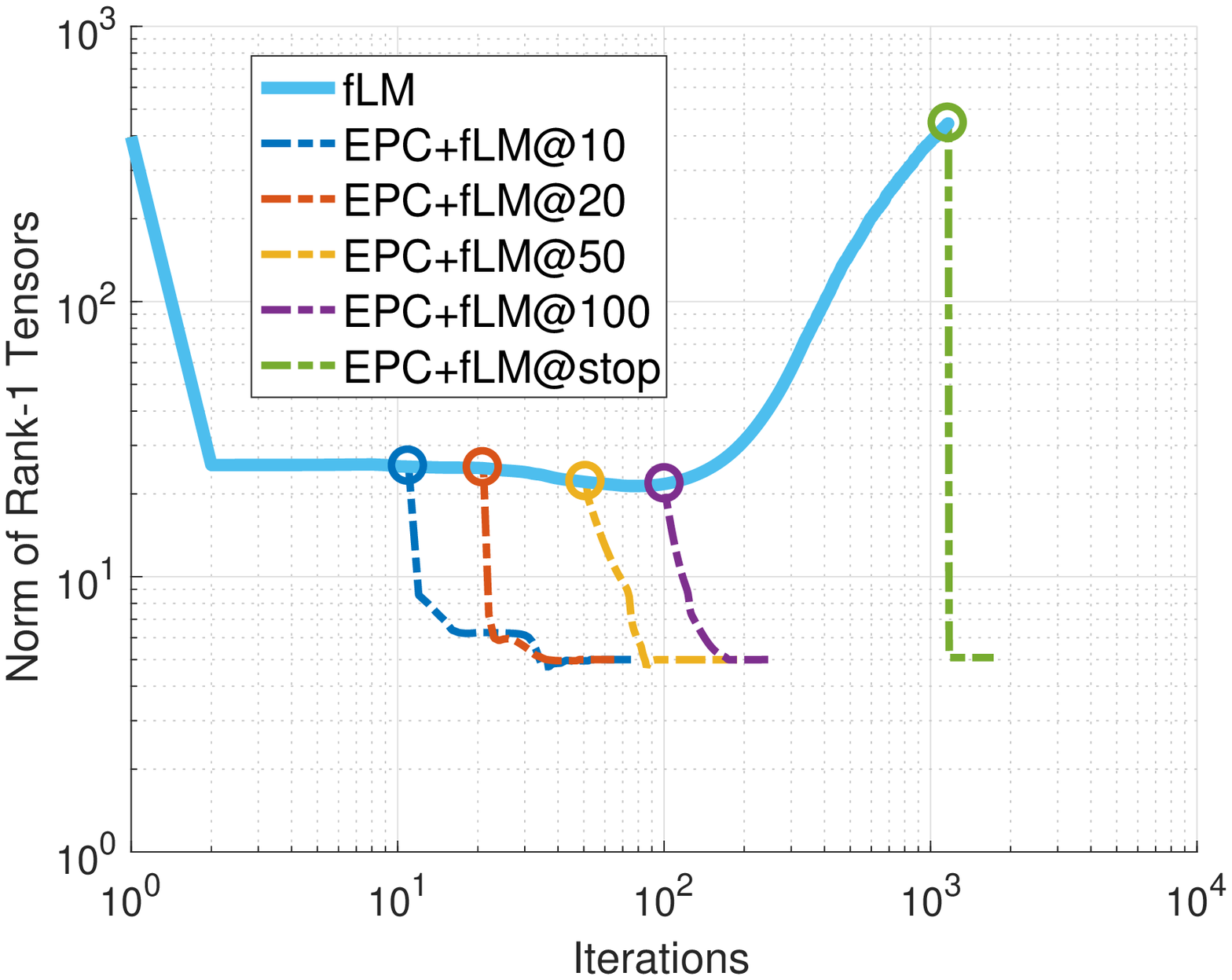}\label{fig_ex_a1}}
\subfigure[]{\includegraphics[width=.48\linewidth, trim = 0.0cm 0cm 1.47cm 0cm,clip=true]{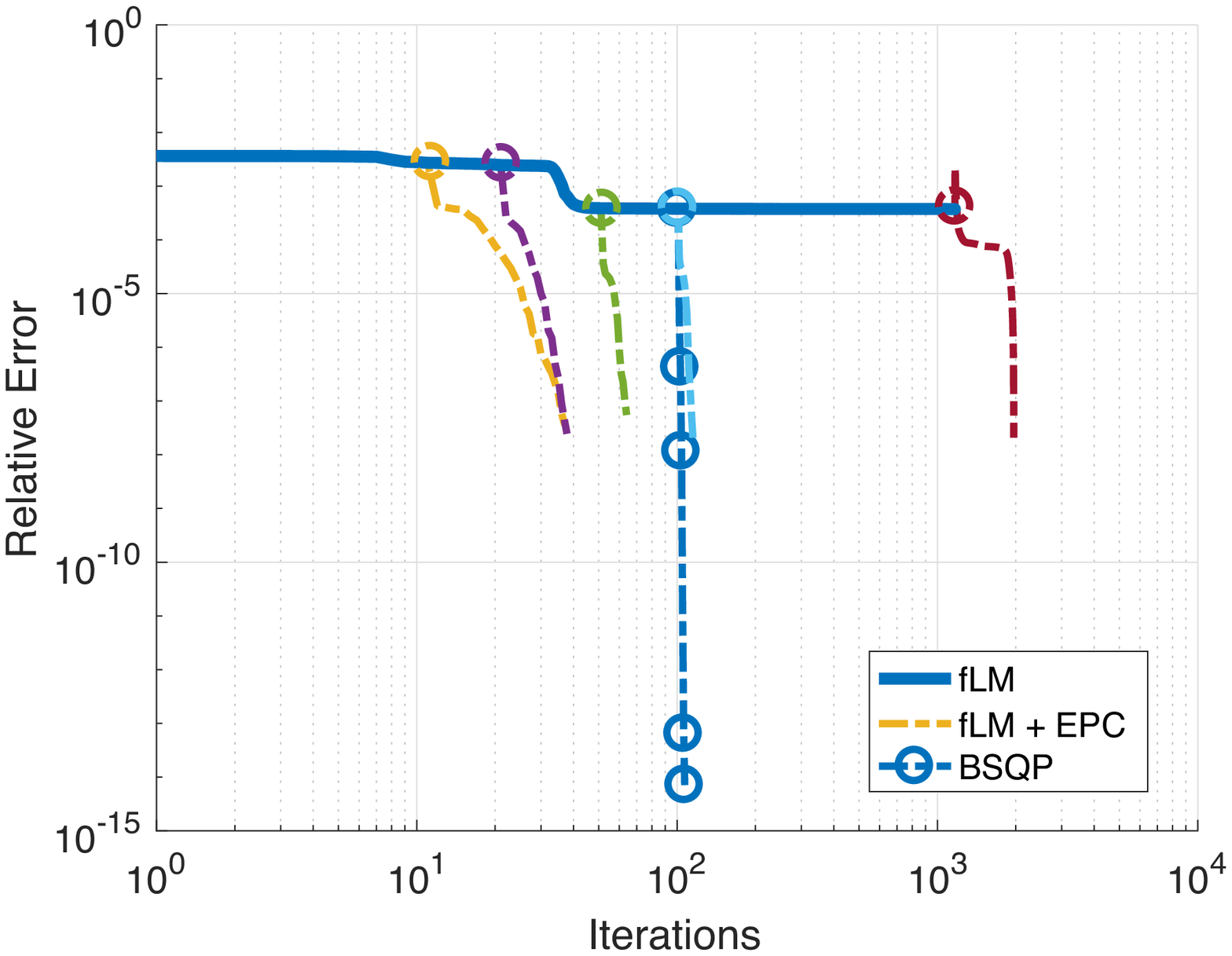}\label{fig_ex_b1}
}
\caption{Comparison of performances of the fLM with and without EPC in Example~\ref{ex1}. 
The algorithm gets stuck in false local minima, while the norm of rank-1 tensors, $\displaystyle \sum_{r=1}^{R} \eta_r^2$, increases dramatically with the number of iterations in \subref{fig_ex_a1}. 
However, the fLM converges quickly when applying EPC after 10, 20 , 50, 100 and when the fLM stops the estimation.
}
\label{fig_bals1}
\end{figure}
 
 \end{example}

\begin{example}[The case when rank-1 tensors have different weights]\label{ex1b}

We  decomposed a similar tensor as in Example~\ref{ex1}, but intensities of the rank-1 tensors were in different scales, $\eta_r = 10 r$, for $r = 1, \ldots, R$. With the same initial values, i.e., $[\bI_4, \1_4]$, but without the correction of rank-1 tensors, the fLM did not converge even when the relative error approached $10^{-4}$.
The algorithm stopped when its damping parameter increased to an extremely large value. However, when applying the rank-1 correction, e.g., at 10, 20, 50, 100 and 2000 iterations, the fLM converged in less than 1000 iterations as seen in Fig.~\ref{fig_bals1b}. 

For this tensor, we applied the BSQP algorithm to the estimated tensor using fLM after 100 iterations. 
The bound in BSQP, $\sum_{r} \|\eta_r\|_2^2 \le \epsilon^2$, was set to the norm of initial rank-1 tensors. During the estimation process, the bound, $\epsilon$, was increased by a factor of 2 if there was not a significant change in the relative error, i.e., the algorithm was getting stuck into local minima, or the relative error did not decrease.
If the relative error tends to converge, we decrease the bound, e.g., by a factor of 1.5.
 Performance of the BSQP algorithm is plotted in Fig.~\ref{fig_bals1b}. The algorithm converged and succeeded in factorizing the tensor. 
The performance of the algorithm is also confirmed in the decomposition of the tensor in Example~\ref{ex1} as illustrated in Fig.~\ref{fig_ex_b1}.

 \begin{figure}[t]
\centering
{\includegraphics[width=.48\linewidth, trim = 0.0cm 0cm 0cm 0cm,clip=true]{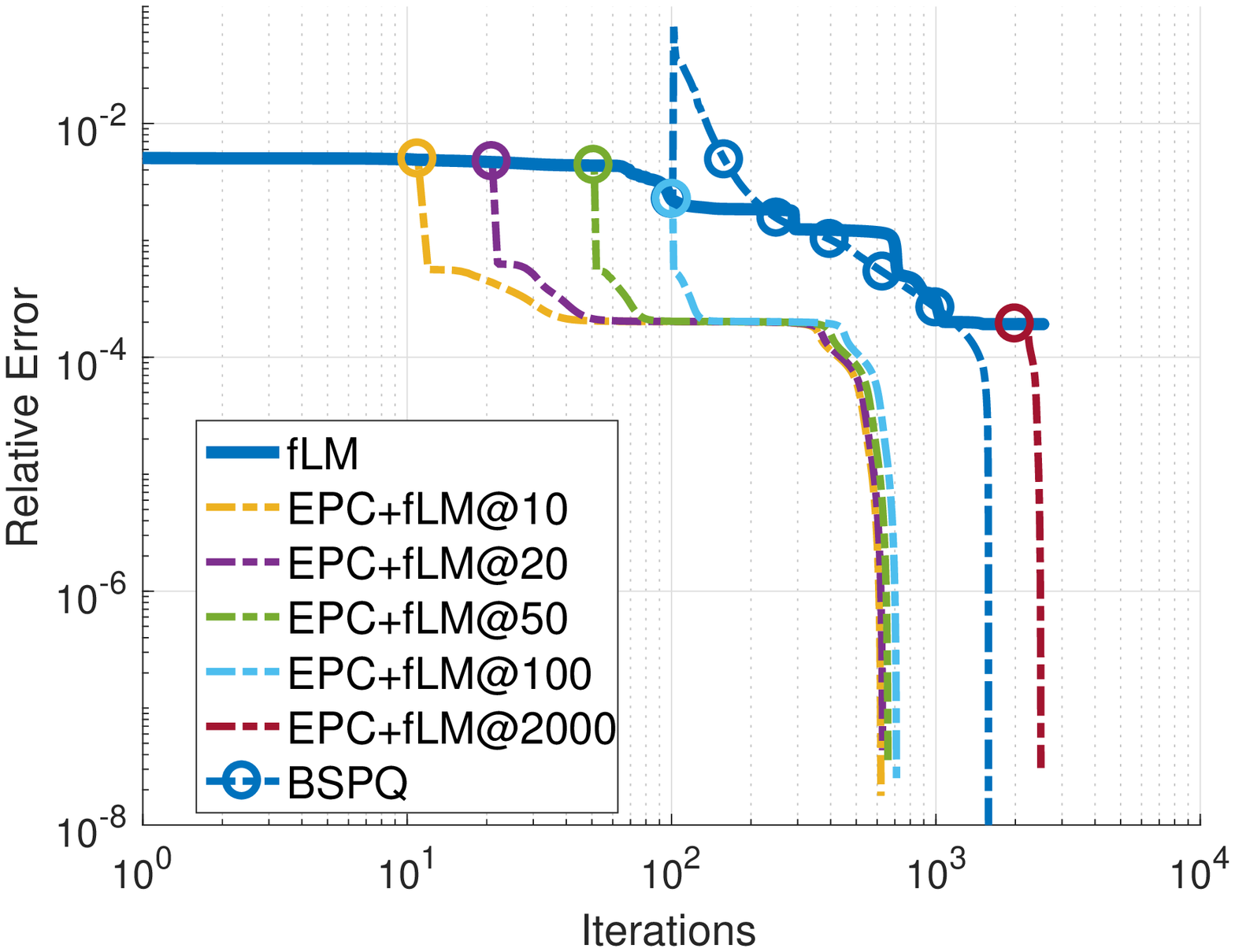}\label{fig_ex_a3}}
\caption{The fLM algorithm can decompose the tensor in Example~\ref{ex1b} with an exact fit when using the correction method of rank-1 tensors.
}
\label{fig_bals1b}
\end{figure}

\end{example}

\begin{example}[CP with a bound constraint on rank-1 tensors]\label{ex2}

In this example, we compare performances of the fastALS and fLM  algorithms for the ordinary CP,
 and the BALS and BSQP algorithms for CP with a bound constraint on rank-1 tensors. 
We decomposed the tensor in Example~\ref{ex1}, and ran the fastALS in 10 iterations to generate initial values for the four considered algorithms.
Fig.~\ref{fig_ex_a1} plots the relative errors of algorithms. 
The fLM algorithm achieved a lower relative error than ALS, but as in the previous examples, none of them could achieve an exact fit (zero approximation error).

The BALS and BSQP decomposed the tensor with an upper bound $ \sum_{r} \eta_n^2 \le 5.05$.
BALS achieved a much lower approximation error than ALS and fLM, and tent to converge with much higher number of iterations. 
The BSQP converged after around 90 iterations. 

When using with the EPC method, the two algorithms fLM and BSQP quickly converged after 10 to 20 iterations. The results also indicate that ALS combined with EPC had a similar performance to that of BALS.

\begin{figure}[t]
\centering
\subfigure[Without EPC]{\includegraphics[width=.48\linewidth, trim = 0.0cm 0cm 0cm 0cm,clip=true]{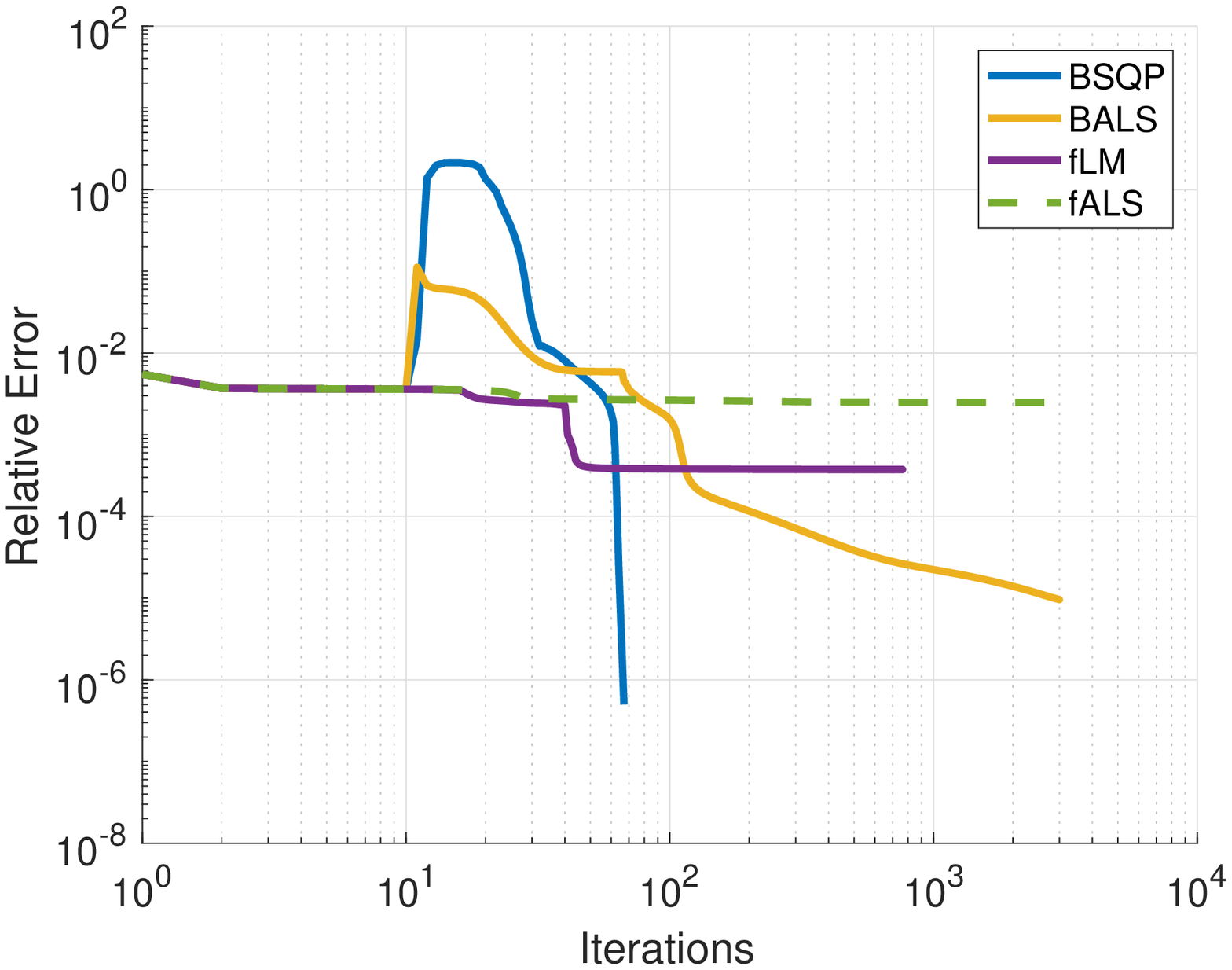}\label{fig_ex_a2}}
\subfigure[With EPC]{\includegraphics[width=.47\linewidth, trim = 0.0cm 0cm 0cm 0cm,clip=true]{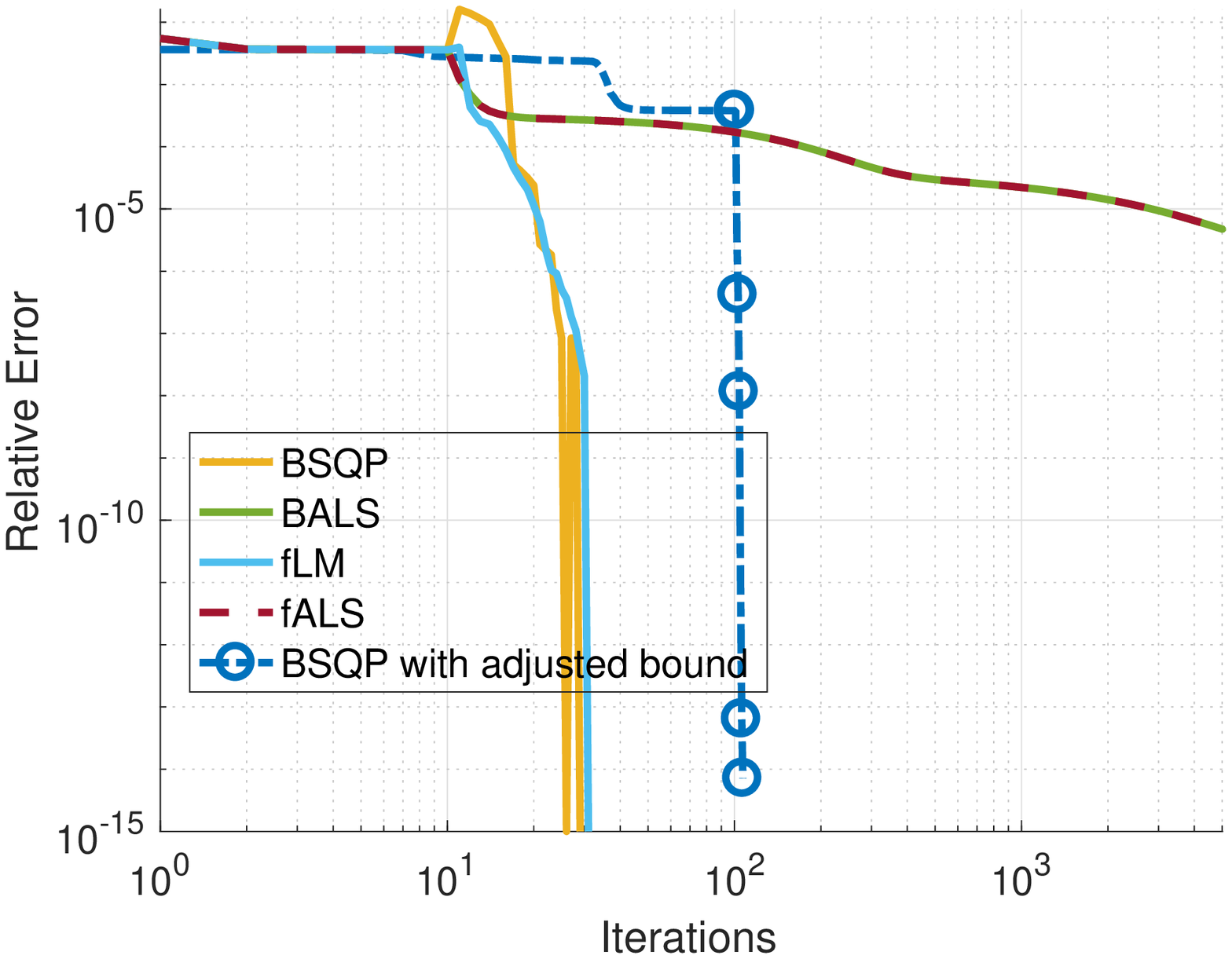}\label{fig_ex_b2}}
\caption{Decomposition of the tensor in Example~\ref{ex1} using the algorithm for CPD with bounded norm constraints.}
\label{fig_bals2}
\end{figure}

\end{example}

\begin{example}\label{ex4}

In this example, we decomposed cubic tensors with size and rank respectively given by $I_n = 4$ and $R = 5$, $I_n = 7$ and $R = 10$, $I_n = 12$ and $R = 15$. As in Example~\ref{ex1}, the first $I_n$ loading components of each factor matrices are highly correlated, while the rest $(R-I_n)$ loading components were randomly generated. A small Gaussian noise was added into the tensors. The results were reported for 150 independent runs. The factor matrices were generated as i.i.d. Gaussian with zero mean and unit variance.

Results for the noise-free cases are compared in Fig.~\ref{fig_bals3mc_2}. Success ratio at a specific error is assessed as the percentage of independent runs that an algorithm attained this error. 
For these hard decomposition scenarios, the fLM algorithm could explain the tensors with a relative error of $10^{-6}$ in about 57\% of the runs for the tensors of size $5 \times 5 \times 5$, but in less than 30\% of runs for the tensors of bigger sizes $7 \times 7 \times 7$ and $12 \times 12 \times 12$.
In most of the tests, the fLM got stuck in local minima with the relative error around $10^{-3}$. However, when using the EPC, either with ACEP or with the SQP method for ECP (SCEP), after executing 10 iterations, the success ratios of the fLM were insignificantly improved and exceeded 96\% for the relative error of $10^{-6}$.

\begin{figure}[t]
\centering
\subfigure[$I = 4$, $R = 5$]{
\begin{minipage}{.31\linewidth}
\includegraphics[width=1\linewidth, trim = 0.0cm -.4cm 0cm 0cm,clip=true]{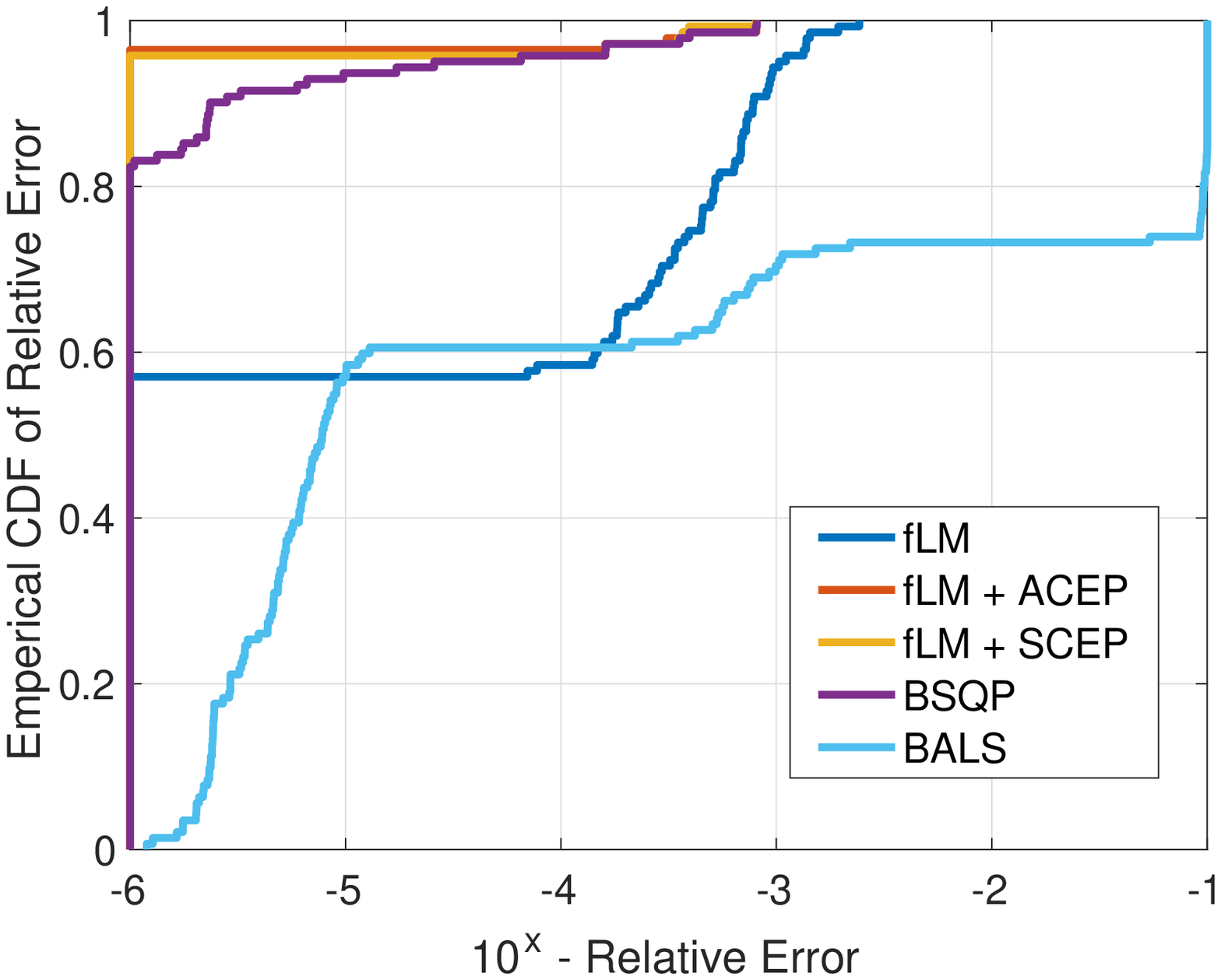}\\\includegraphics[width=1\linewidth, trim = 0.0cm -.4cm 0cm 0cm,clip=true]{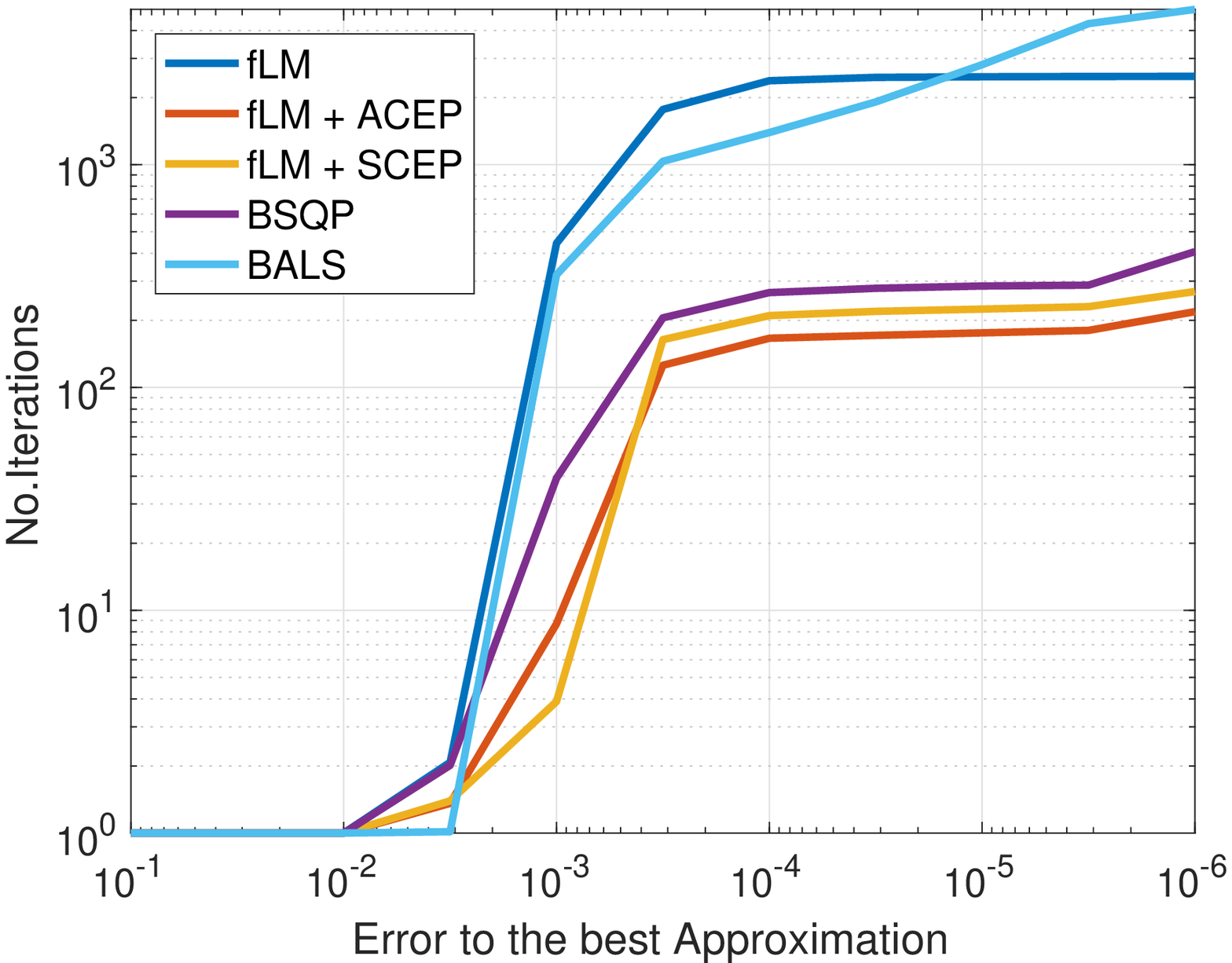}
\end{minipage}
\label{fig_ex_a4}
}
\subfigure[$I = 7$, $R = 10$]{
\begin{minipage}{.31\linewidth}
\includegraphics[width=1\linewidth, trim = 0.0cm -.4cm 0cm 0cm,clip=true]{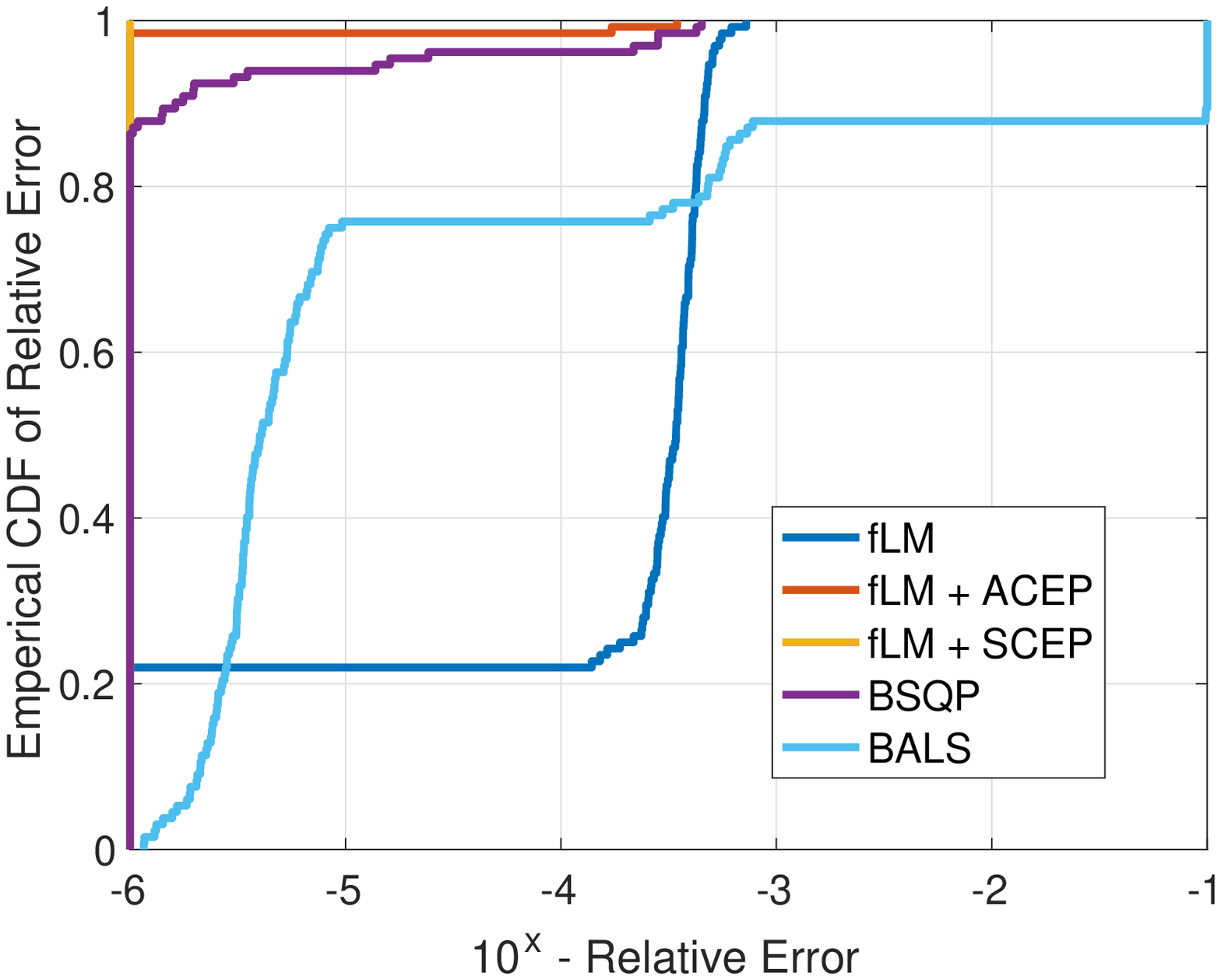}\\
\includegraphics[width=1\linewidth, trim = 0.0cm -.4cm 0cm 0cm,clip=true]{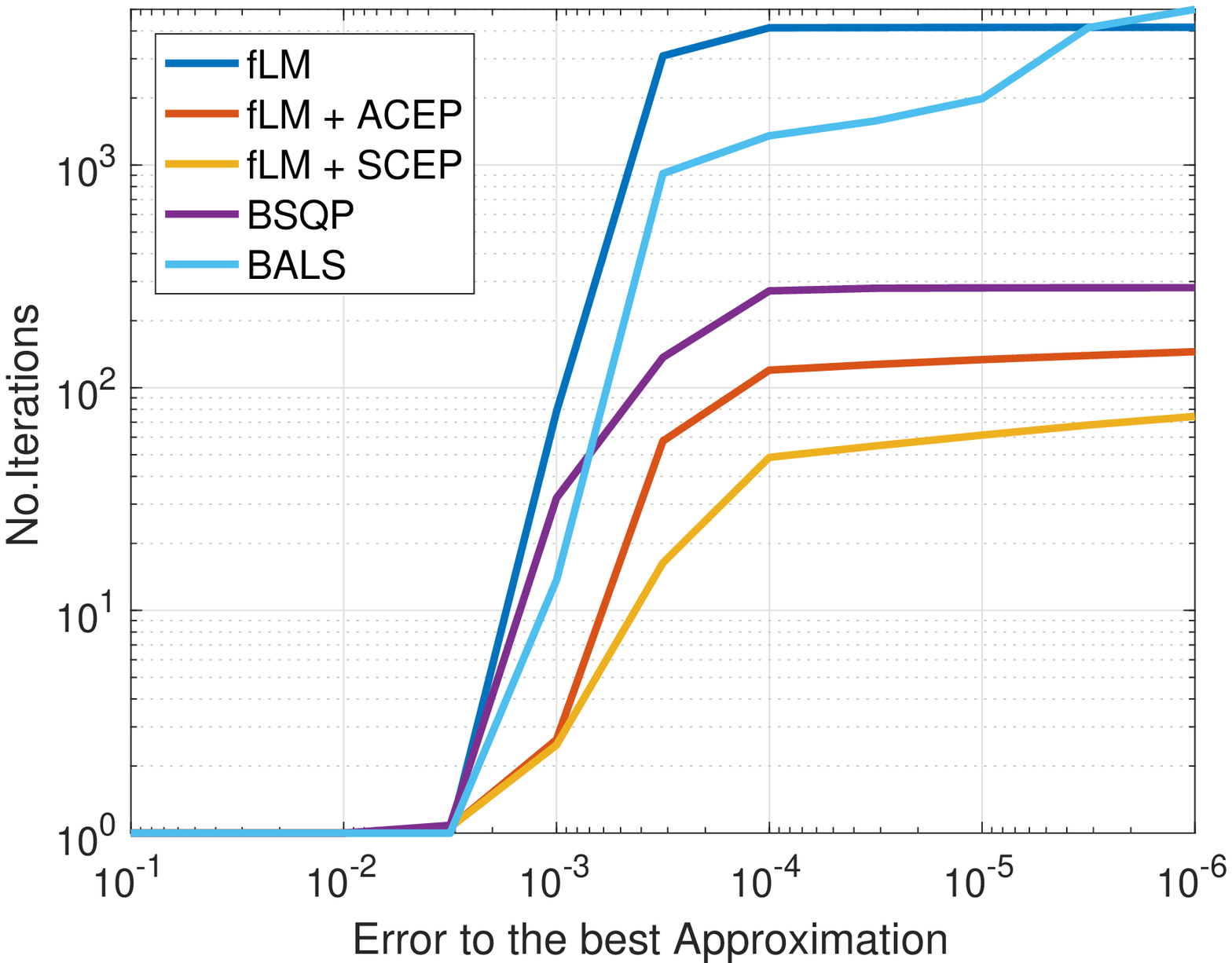}
\end{minipage}
\label{fig_ex_b4}
}
\subfigure[$I = 12$, $R = 15$]{
\begin{minipage}{.31\linewidth}
\includegraphics[width=1\linewidth, trim = 0.0cm -.4cm 0cm 0cm,clip=true]{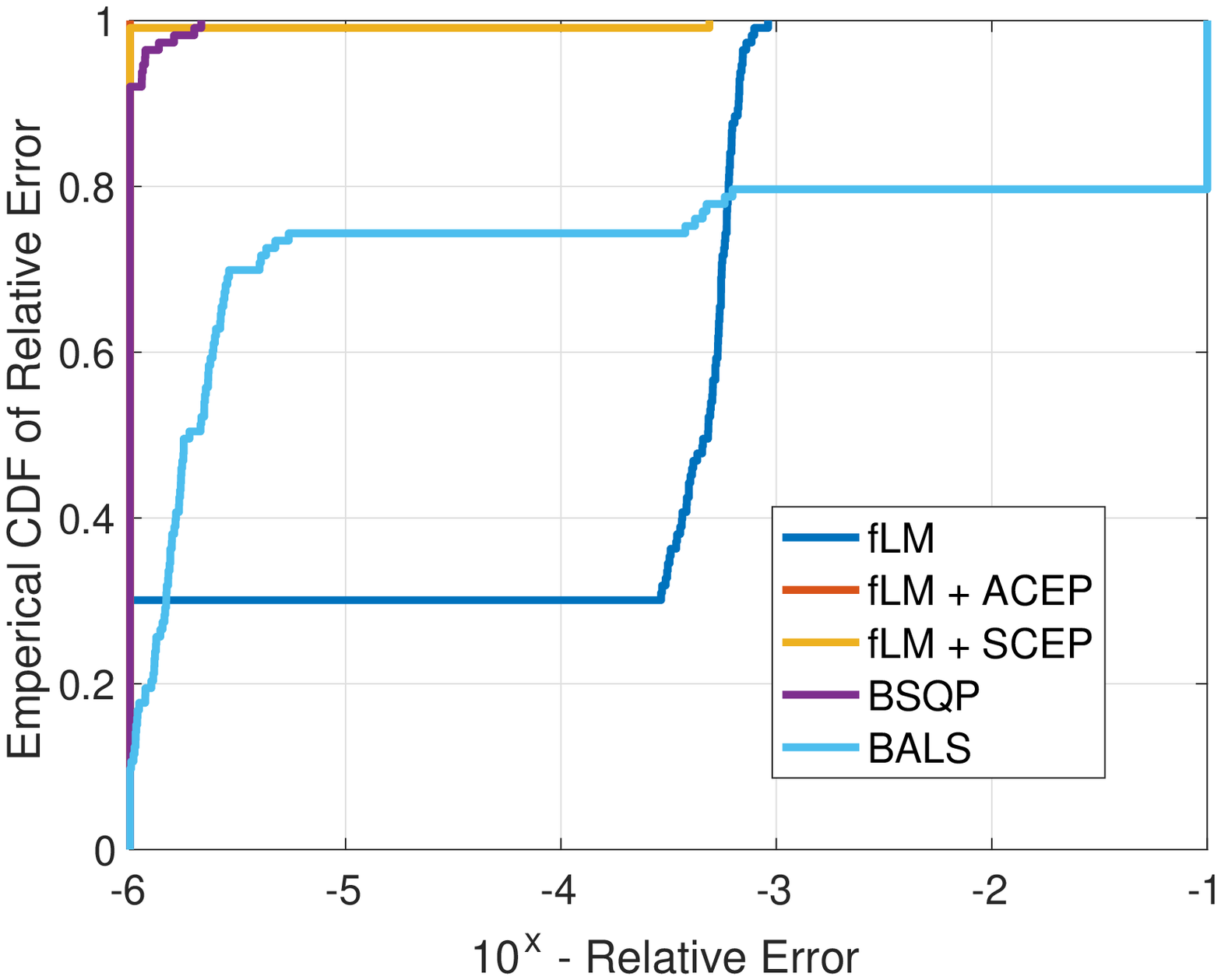}\\
\includegraphics[width=1\linewidth, trim = 0.0cm -.4cm 0cm 0cm,clip=true]{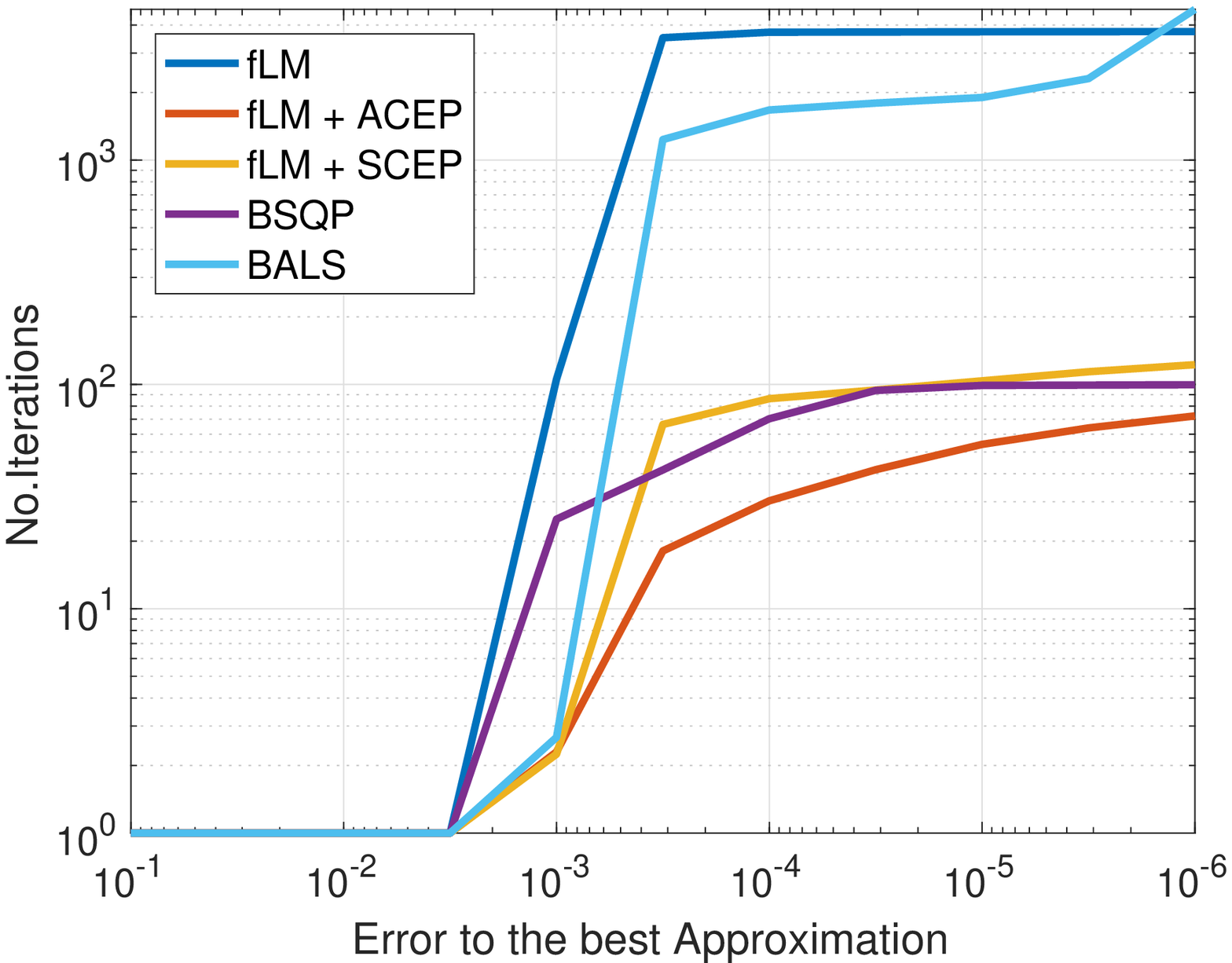}
\end{minipage}
\label{fig_ex_c4}}
\caption{Decomposition of the noise-free tensors in Example~\ref{ex4}. The first row compares success ratios of the considered algorithms. A relative error of $10^{-6}$ is considered perfect to attain for decomposition of a noise-free tensor. The second row compares the numbers of iterations of algorithms to attain the best relative error. The results were reported over 150 independent runs. 
}
\label{fig_bals3mc_2}
\end{figure}

For the same tensors, we applied algorithms for the bounded CPD. The bound of the norm of rank-1 tensors was adjusted during the estimation. The BALS seemed to achieve higher success ratios than fLM for tensors with $I_n = 7$ and $I_n = 12$. It could explain the tensors with a relative error of $10^{-5}$ in 60-70\% of the runs. The BSPQ achieved a much higher success ratio than the BALS.

In another assessment, we compare the number of iterations of algorithms to achieve the best relative error. For example, in order to achieve an error of $10^{-6}$ to the best relative error, the fLM algorithm might need a thousand of iterations, while this algorithm with ACEP and SCEP needed on average 72 and 122 iterations. BSQP required 400 iterations as shown in Fig.~\ref{fig_ex_a4} for decomposition of tensors of size $4 \times 4 \times 4$. This is because the algorithm iterated to adjust the bound of the norm of rank-1 tensors.


As seen in Fig.~\ref{fig_bals3mc_2}, when the algorithms reached the approximation error of $10^{-4}$, they quickly converged to the approximation error of $10^{-8}$. In total, the number of iterations of the three algorithms, fLM+ACEP, fLM+SCEP and BSQP, were at most comparable.
In summary, the BSQP, Interior Point method for bounded norm constrained CPD (BITP) and fLM with the EPC explained the noise-free tensors with a nearly perfect accuracy 
in less than 300 iterations.

\begin{figure}[t]
\centering
\subfigure[$I = 4$, $R = 5$]{
\begin{minipage}{.31\linewidth}
\includegraphics[width=1\linewidth, trim = 0.0cm -.4cm 0cm 0cm,clip=true]{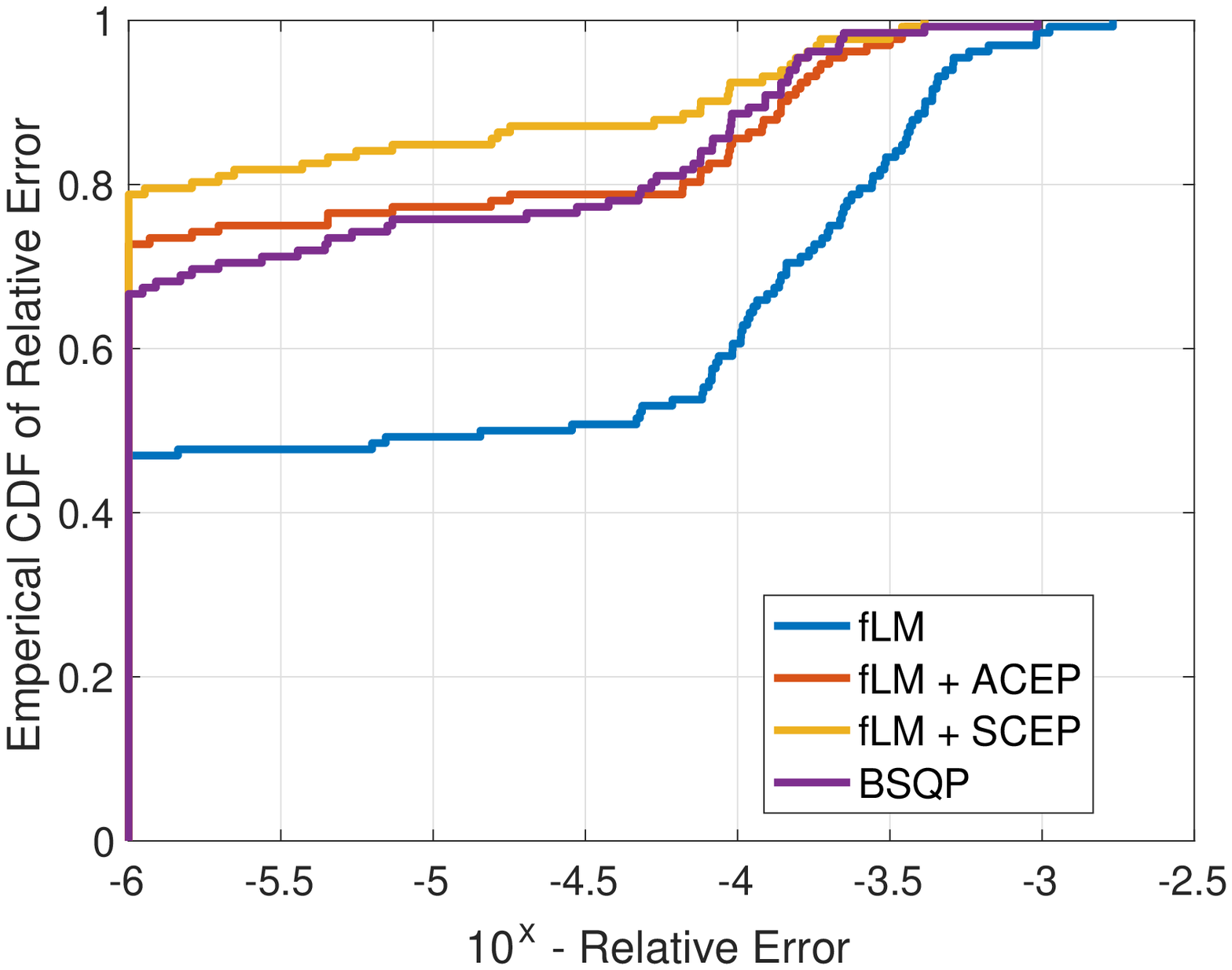}\\\includegraphics[width=1\linewidth, trim = 0.0cm -.4cm 0cm 0cm,clip=true]{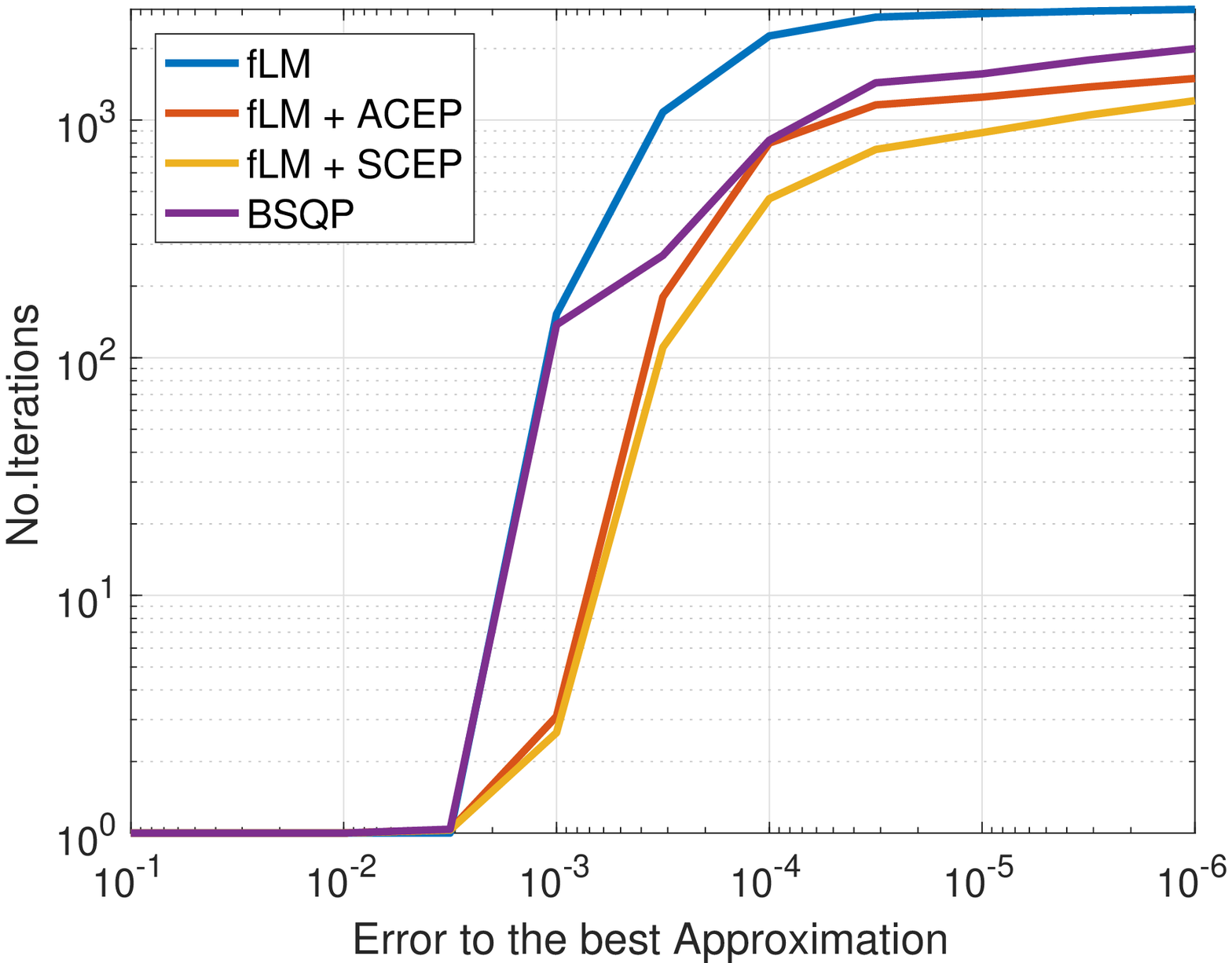}
\end{minipage}
\label{fig_ex_a5}
}
\subfigure[$I = 7$, $R = 10$]{
\begin{minipage}{.31\linewidth}
\includegraphics[width=1\linewidth, trim = 0.0cm -.4cm 0cm 0cm,clip=true]{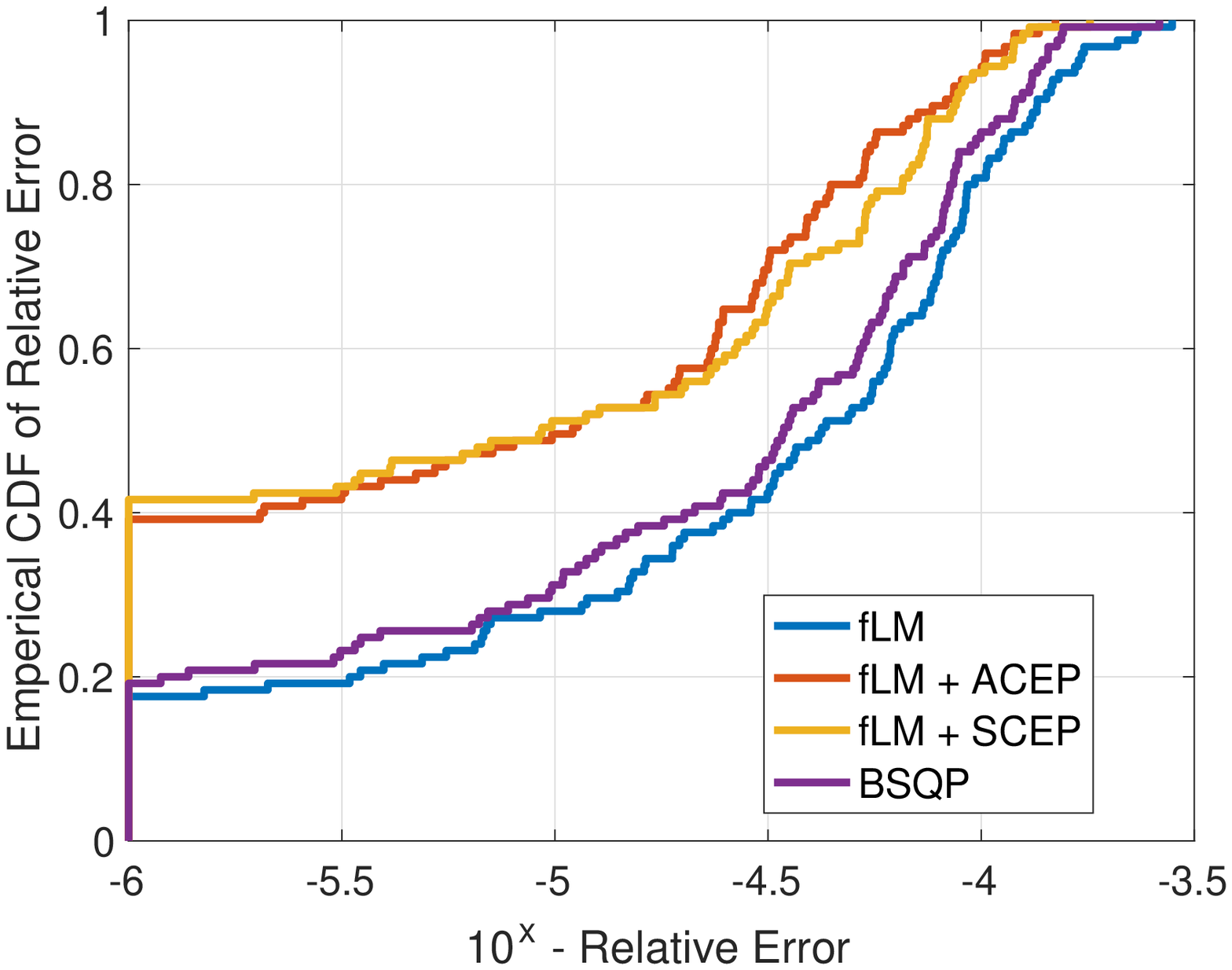}\\
\includegraphics[width=1\linewidth, trim = 0.0cm -.4cm 0cm 0cm,clip=true]{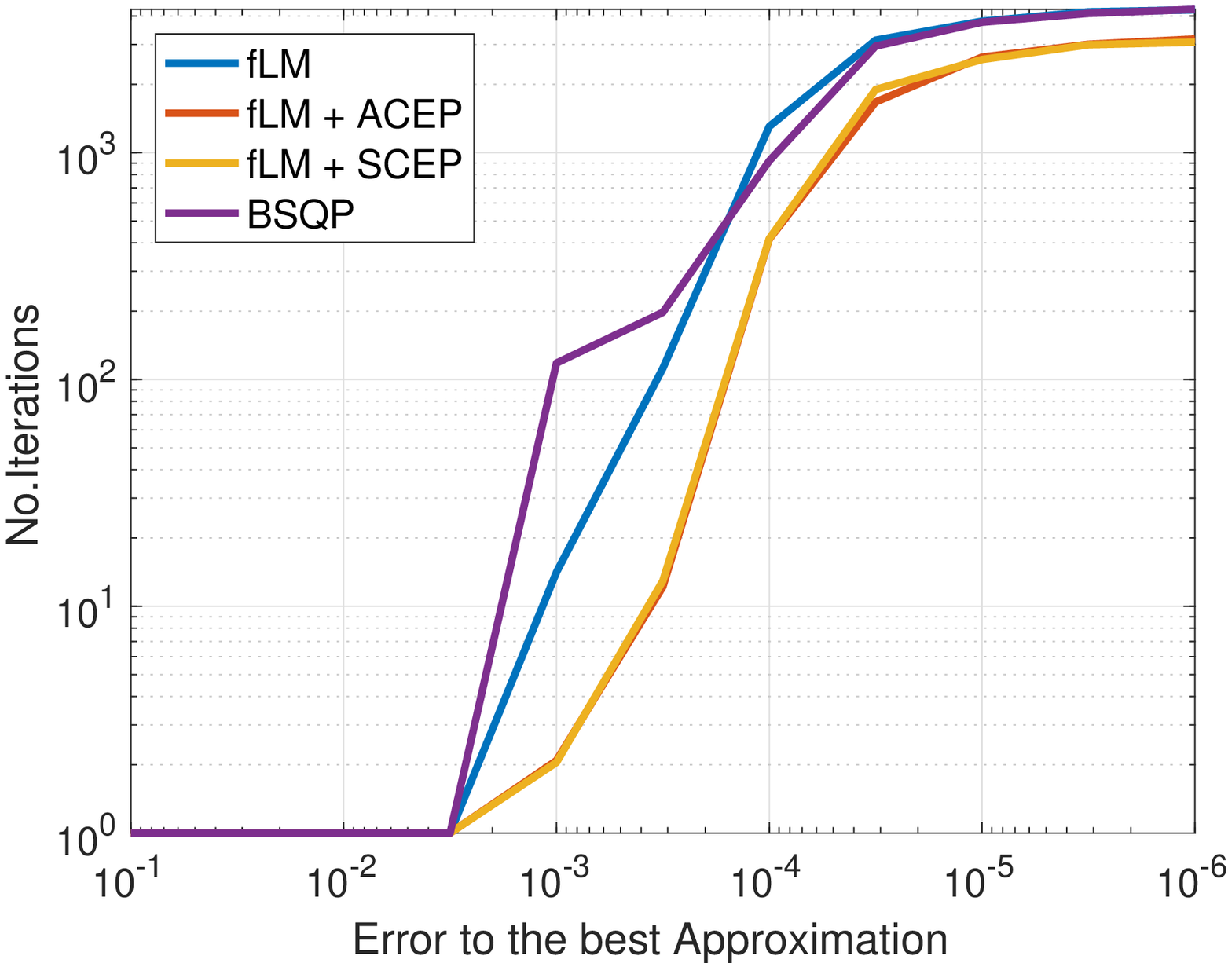}
\end{minipage}
\label{fig_ex_b5}
}
\caption{Decomposition of the noisy tensors in Example~\ref{ex4}. The first row compares success ratios of the considered algorithms. A relative error of $10^{-6}$ is considered perfect to attain for decomposition of a noise-free tensor. The second row compares the numbers of iterations of algorithms to attain the best relative error. The results were reported over 150 independent runs.}
\label{fig_bals3mc_4noisy}
\end{figure}

For the test cases with noisy tensors, we added Gaussian noise into the noise-free tensors at a signal-to-noise ratio of SNR = 50 dB. The success ratio and the number of iterations to achieve the best relative error are illustrated in Fig.~\ref{fig_bals3mc_4noisy}.
The fLM attained an approximation error of $10^{-6}$ to the best relative error in 47\% and 18\% of runs for the tensors with sizes of $I_n = 4$ and $I_n = 7$, respectively, while BSQP met the approximation error of $10^{-6}$ in 67\% and 20\% of run. The results confirm that the EPC method, either ACEP or SCEP, gained the success ratio of the fLM up to 79\% and 42\%, respectively, while the algorithm demanded a lower number of iterations than fLM. 

%

%

\end{example}

 \begin{example}[Decomposition of block tensors]\label{ex_bcd}
 
 This example was inspired by the block-term decomposition of 
the tensors which had rank exceeding the dimensions, and  highly collinear loading components. We constructed the tensors from two blocks of size $6 \times 6 \times 6$, each of rank 6, and collinearity degrees among the loading components were within a range of $[0.95, 0.999]$
 \be
 \tY = \tI \times_1 \bU_{1,1}  \times_2 \bU_{1,2}  \times_3 \bU_{1,3} +  \tI  \times_1 \bU_{2,1}  \times_2 \bU_{2,2}  \times_3 \bU_{2,3} \, ,
 \ee
 where $\tI$ represents the diagonal tensor.
Our experience is that the tensors are difficult to decompose 
for most of the conventional CP techniques.
We ran the fastALS algorithm in 10 iterations to generate initial values. 

The fLM did not complete the decomposition of the noise-free tensors within the error range of $10^{-6}$ even after 3000 iterations as seen in Fig.~\ref{fig_bcd2blocks}. The reason is that the norm of estimated rank-1 tensors was relatively large, on average around 3994.3. Using the EPC methods, e.g., ACEP or SCEP, we reduced the norm to 11.8. By this way, the fLM converged in a few hundreds of iterations as illustrated in Fig.~\ref{fig_res_mc_bcd_R12_cpd_noiter} for one run of the decomposition. In Fig.~\ref{fig_bcd2blocks}, ``ACEP+fLM'' stands for the combination of the EPC after 10 iterations of the fastALS, and the fLM, whereas  ``fLM+SCEP+fLM'' represents the process of three stages: running fLM until it stopped, then applying SCEP to correct the rank-1 tensors, and running the fLM again. 

The results confirm that the proposed correction method worked efficiently. When using with this, the fLM could complete the decomposition in more than 80\% of runs. The results were reported over 130 independent runs.

\begin{figure}[t]
\centering
\subfigure[]{\includegraphics[width=.48\linewidth, trim = 0.0cm 0cm 0cm 0cm,clip=true]{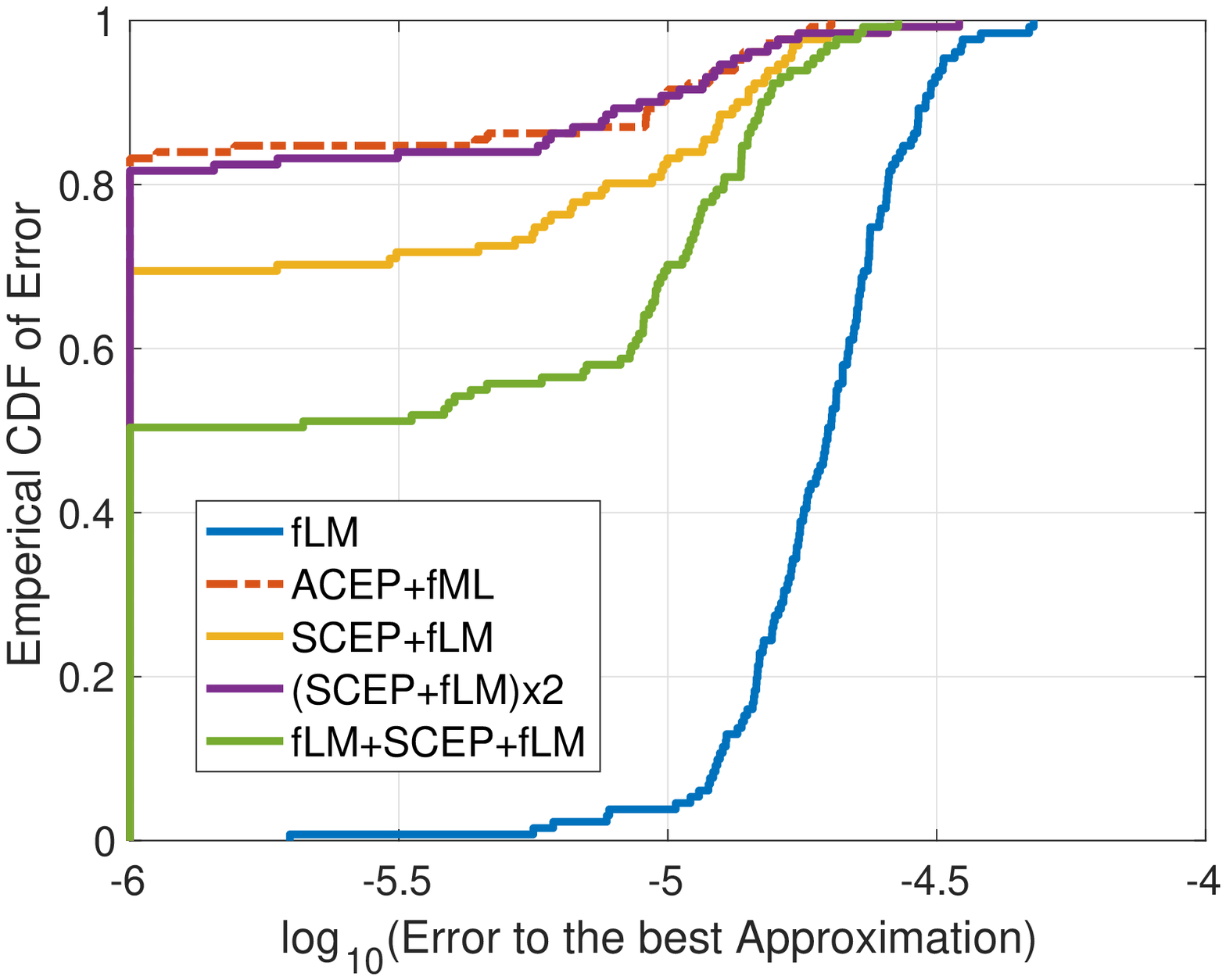}\label{fig_res_mc_tv_data_R12_cpd_error}}
\subfigure[]{\includegraphics[width=.48\linewidth, trim = 0.0cm 0cm 1.9cm 1cm,clip=true]{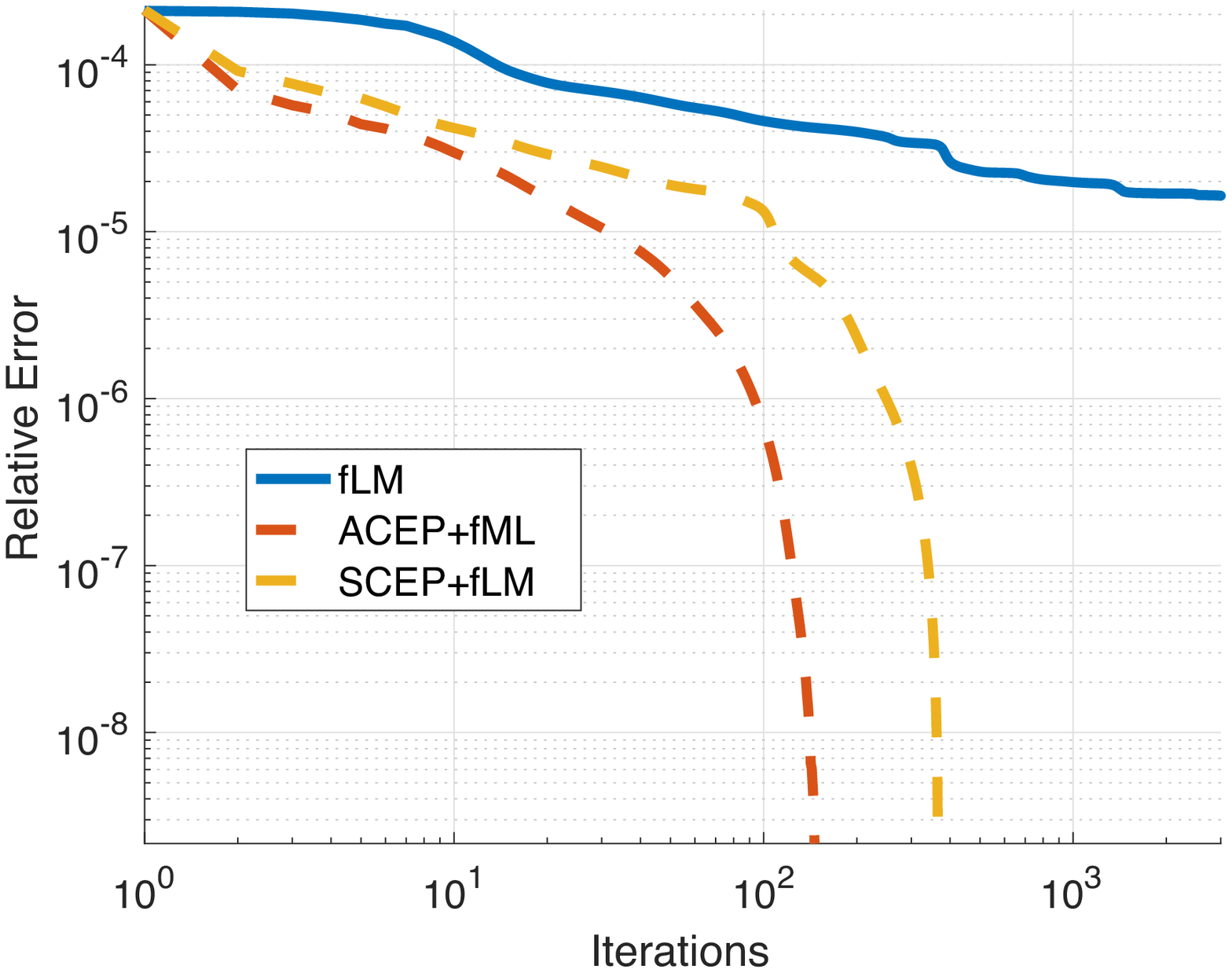}\label{fig_res_mc_bcd_R12_cpd_noiter}}
\caption{(a) Success ratios of the fLM algorithm with and without EPC in Example~\ref{ex_bcd}. (b) The relative error of the fLM algorithms in one run of the CP decomposition.
}
\label{fig_bcd2blocks}
\end{figure}

\end{example}

\begin{example}[Decomposition of tensor for multiplication of two matrices of size $3 \times 3$]\label{ex_3x3}

In this example, we compare the performance of algorithms for CPD with  and without EPC and CPD with a bounded rank-1 tensor norm. The decomposed tensor is the multiplication tensor for the case of two matrices of size $3 \times 3$. This tensor is of size $9 \times 9 \times 9$, contains only zeros and ones, and obeys 
\be
	\vtr{\bA \bB} = \tY \times_1 \vtr{\bA^T}^T \times_2 \vtr{\bB^T}^T \, \notag
\ee
where $\bA$ and $\bB$ are of size $3 \times 3$.
The tensor is considered of rank-$R=23$.
In \cite{journals/corr/TichavskyPC16}, we developed an LM algorithm to update the vector of parameters which is assumed to be on a ball with a prescribed diameter. 

Decomposition of this tensor using ALS or LM often gets stuck in false local minima, 
or requires a huge number of iterations. This is because the norm of rank-1 tensors is significantly large as seen in Fig.~\ref{fig_normvserror_mx3x3x} for the results using the fLM algorithm \cite{Phan_fLM}.

For this case, we used the estimated tensor obtained after 10 runs using the fLM algorithm to initialise the parameters in the BSPQ and BITP algorithm for the bounded CPD. 
The bound of the rank-1 tensor norm was set to $\epsilon = 15$.
The results show that the two algorithms converged after a few tens of iterations. This is much faster than using fLM without EPC. 

In another comparison, we corrected rank-1 tensors of the obtained results using the fLM algorithm. The new tensor after the EPC was then used as initial values for the BSQP and BITP  for the bounded CPD and the fLM algorithms \cite{Phan_fLM}.
The results are compared in Fig.~\ref{fig_cp_correction_}.
For this later test, the three algorithms converged after 10 iterations. 

\begin{figure}[t]
\centering
\subfigure[]{\includegraphics[width=.48\linewidth, trim = 0.0cm 0cm 0cm 0cm,clip=true]{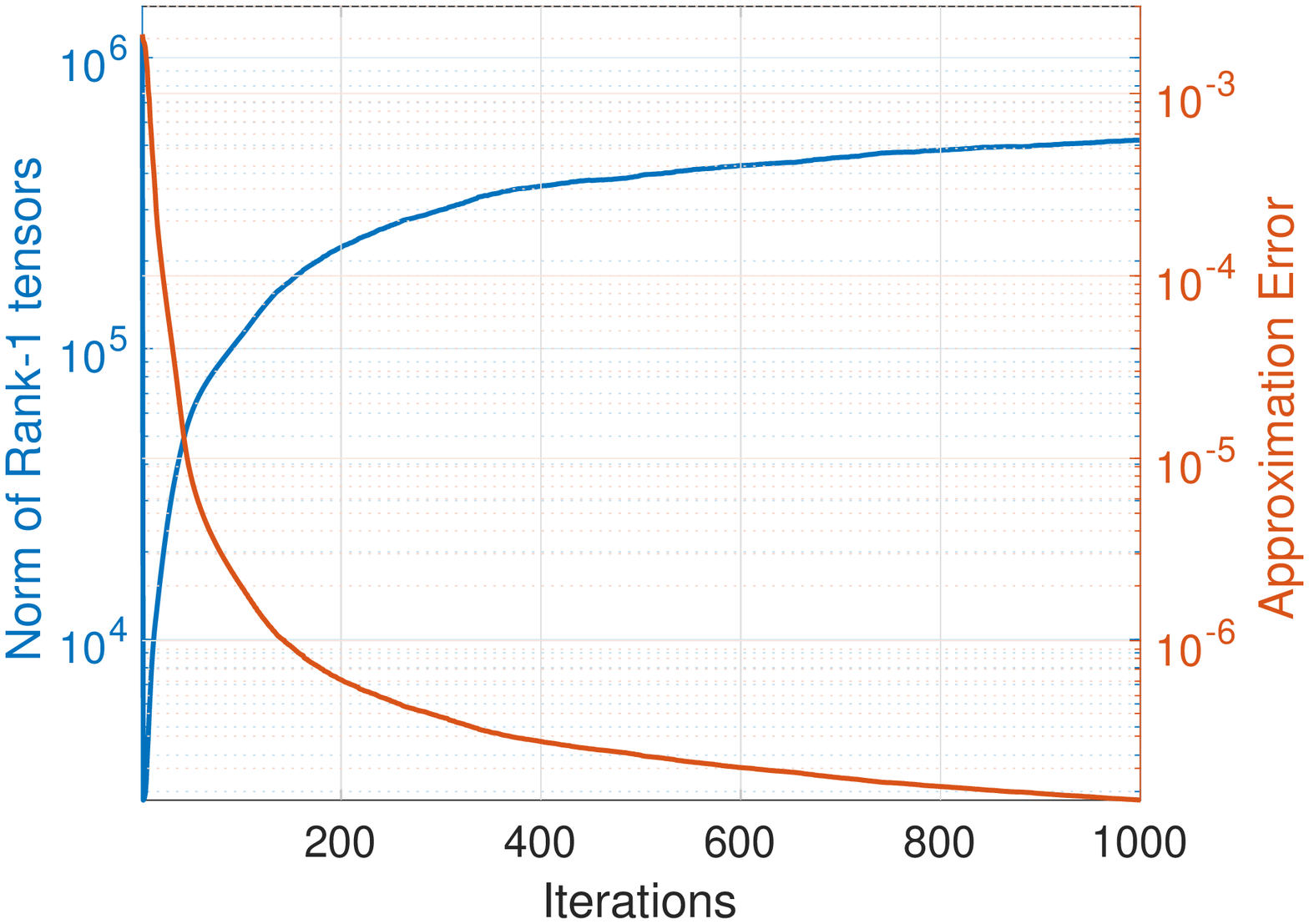}\label{fig_normvserror_mx3x3x}}\hfill
\subfigure[]{\includegraphics[width=.45\linewidth, trim = 0.0cm 0cm 0cm 0cm,clip=true]{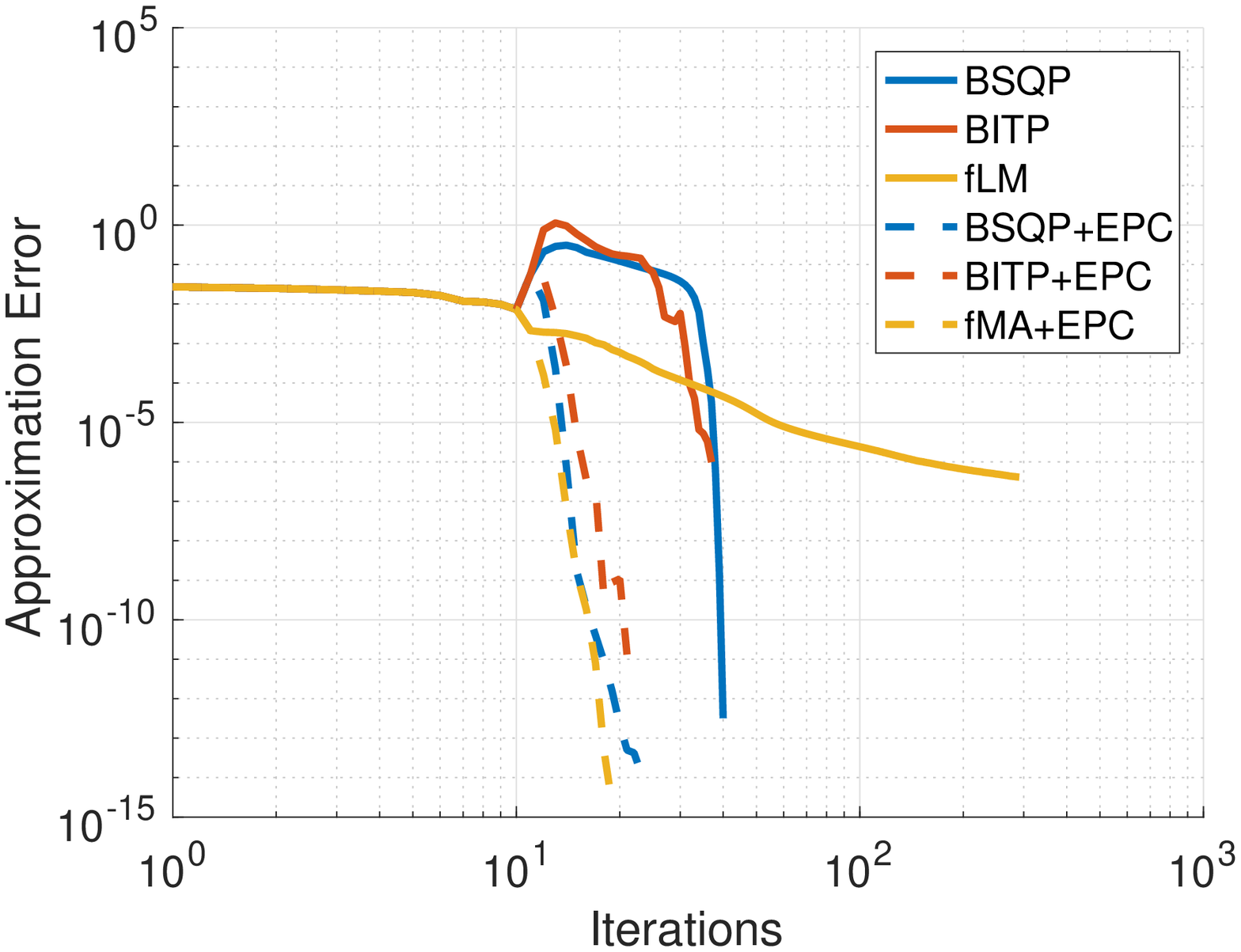}}
\caption{Comparison of the approximation errors in CPD of the multiplicative tensors of size $9 \times 9 \times 9$ which has rank of $R = 23$. The three considered algorithms are ran from the same initial point which is generated by executing the fLM algorithm in 10 iterations. 
After 10 iterations, the estimated tensor is corrected so that its rank-1 tensors have minimal tensor norms. The algorithms continue the decomposition with and without EPC. 
}
\label{fig_cp_correction_}
\end{figure}

\end{example}

%
%
%
%
%
%
%

\begin{example}[Decomposition of the TV-ratings data \cite{Lundy}]\label{ex_tvdata}

We decomposed the TV-ratings data \cite{Lundy} which comprises 16 rating scales $\times$ 15 American TV shows $\times$ 30 subjects. This data is well known to illustrate the degeneracy in CPD for example with the rank $R$ = 2, 3 or 4 \cite{Harshman04,journals/siammax/Stegeman12}. Here we compared the fLM algorithm with and without the EPC for the decomposition of rank-$R = 10$. 
We ran the ALS algorithm in 100 iterations to generate the initial parameters, then executed the fLM algorithm.
For the EPC method, the bound of the approximation error was set to 1.01 times of the approximation error of the initial point. 
 The success ratios of the considered algorithms are plotted in Fig.~\ref{fig_tv_err}.
In 74.6\% of runs, the relative errors obtained by the fLM were very close to the best results, with a difference less than $10^{-6}$. 
The success ratio of fLM was improved after executing the EPC either with ACEP or SCEP (see Fig.~\ref{fig_tv_err}). In Fig.~\ref{fig_tvdata}(b), we illustrate the relative errors of algorithms as a function of the number of iterations in one run. The fLM started from a lower error but got stuck in a false local minimum after 100 iterations. Since the error bound was set to higher than the approximation error, the fLM with EPC started from higher relative error, but in the final, this algorithm achieved a lower approximation error as seen in Fig.~\ref{res_mc_tvdata_R10_err_vs_iter_fixed}.

\begin{figure}[t]
\centering
\subfigure[]{\includegraphics[width=.48\linewidth, trim = 0.0cm 0cm 0cm 0cm,clip=true]{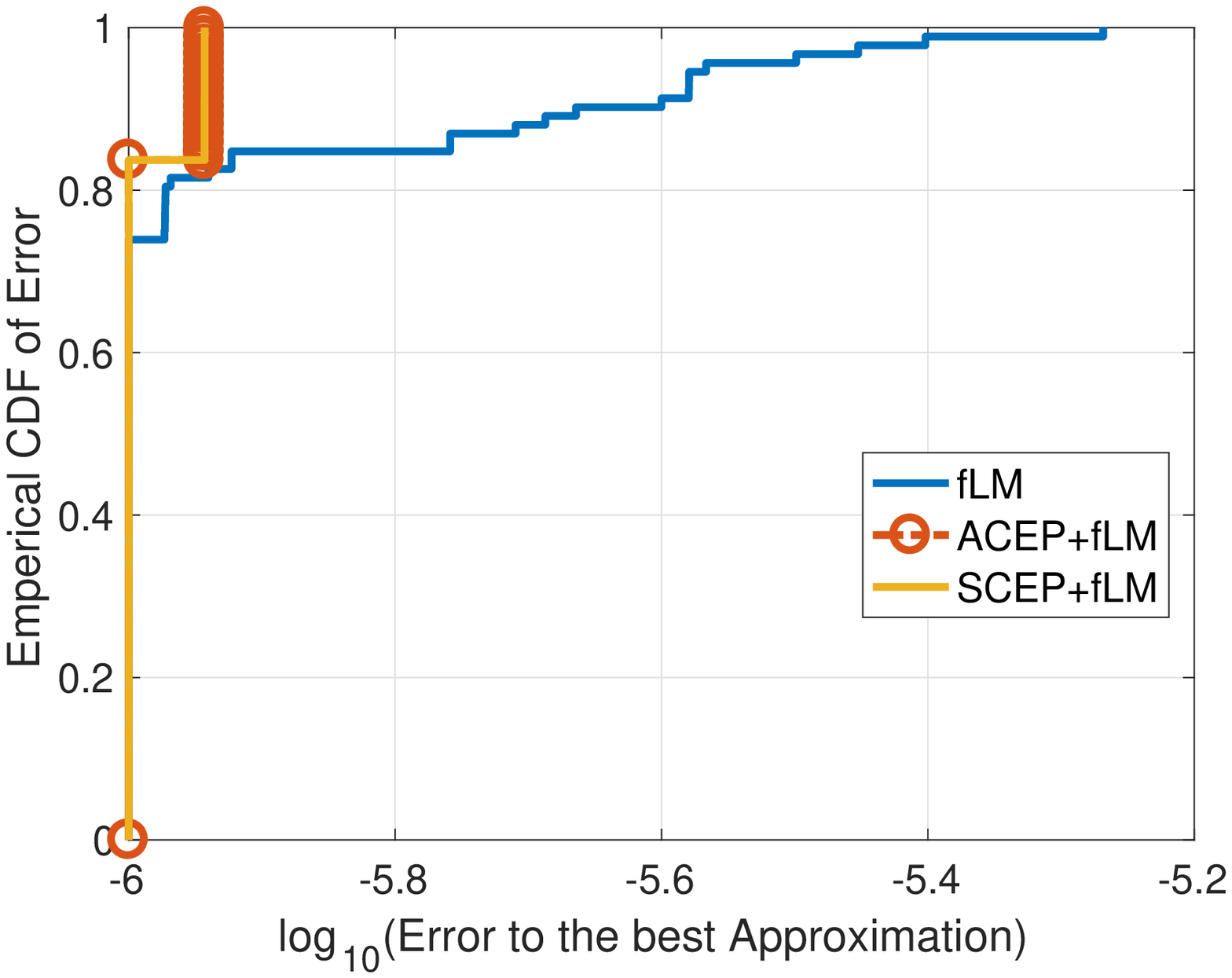}\label{fig_tv_err}}
\subfigure[]{\includegraphics[width=.48\linewidth, trim = 0.0cm 0cm 0cm 0cm,clip=true]{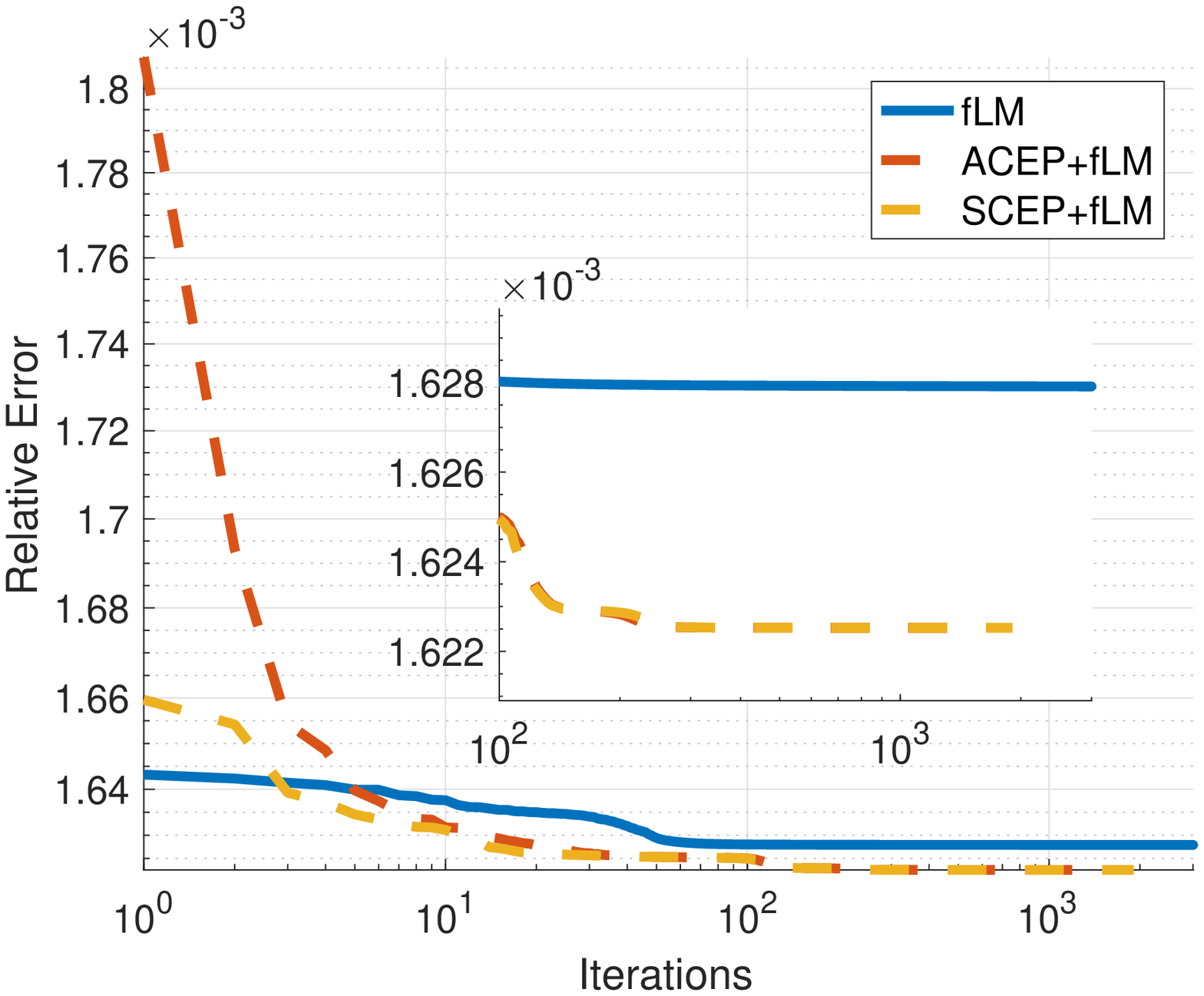}\label{res_mc_tvdata_R10_err_vs_iter_fixed}}
\caption{(a) Success ratios of the fLM algorithms with and without EPC in decomposition of the TV-ratings data. (b) Illustration of the changes of the relative errors in one run of the estimation. The fLM got stuck in a false local minima after at most 100 iterations.
}
\label{fig_tvdata}
\end{figure}

\end{example}

\section{Conclusions}\label{sec::conclusions}

In difficult scenarios of the CP tensor decomposition, when large loading components may cancel each other, we propose to seek
new decompositions with the same approximation error but with a minimum norm of the rank-one components.
In particular, we derive solutions to two constrained optimization problems, one for the error preserving correction method, and another one for the bounded CPD.
 The factor matrices in the two problems can be updated in closed-form in an alternating update scheme through solving Spherical Constrained Quadratic Programming. In addition, the SQP-based all-at-once algorithms have been developed to update all the parameters in the two problems at a time, but with a low complexity for the inversion of the Hessian matrices.
The relation between the new alternating algorithms with the ordinary ALS algorithm has been also presented.  
In simulations, we confirmed the efficiency of the proposed algorithms in the decomposition of tensors with rank exceeding the tensor dimensions (multiplication tensors) and on tensors with highly collinear rank-one components.

\appendices

\section{Linear Regression with a Bound Constraint}\label{sec::linreg_bounderror}

The linear regression problem with a constraint on the regression error is stated as below 
\be
\min_{\bx}  \quad & \|\bx\|^2   \quad 
\text{subject to} &   \|\by - \bA \bx \| \le  \delta  \label{eq_linreg_bound}
\ee 
where $\by$ is a vector of length $I$ of dependent variables, $\bA$ is a regressor matrix of size $I \times K$, and a nonnegative regression bound $\delta$. 

It is obvious that if $\delta \ge \|\by\|$, then the zero vector $\bx = \0$ is a minimiser to (\ref{eq_linreg_bound}).
Therefore, in order to achieve a meaningful regression, the regression bound $\delta$ needs to be in the following range. 

\begin{lemma}[Range of the bound $\delta$]
The problem (\ref{eq_linreg_bound}) has a minimiser of nonzero entries when 
\be
\|{\bPi}_{\bA}^{\perp}  \, \by \|  \le \delta <  \| \by\|
\ee
where  ${\bPi}_{\bA}^{\perp}$ is an orthogonal complement of the column space of $\bA$.
\end{lemma}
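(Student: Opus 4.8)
The plan is to prove the two inequalities separately: the lower bound $\delta \ge \|\bPi_{\bA}^{\perp}\by\|$ characterizes exactly when the feasible set is nonempty (so that a minimiser exists at all), while the strict upper bound $\delta < \|\by\|$ guarantees that the trivial point $\bx=\0$ is infeasible, forcing every feasible point --- and in particular the minimiser --- to be a nonzero vector.

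First I would analyze the feasible set $\calF = \{\bx : \|\by - \bA\bx\| \le \delta\}$. Writing $\bPi_{\bA} = \bI - \bPi_{\bA}^{\perp}$ for the orthogonal projector onto the column space of $\bA$, I decompose $\by - \bA\bx = (\bPi_{\bA}\by - \bA\bx) + \bPi_{\bA}^{\perp}\by$. Since both $\bA\bx$ and $\bPi_{\bA}\by$ lie in the column space of $\bA$, whereas $\bPi_{\bA}^{\perp}\by$ is orthogonal to it, Pythagoras gives $\|\by-\bA\bx\|^2 = \|\bPi_{\bA}\by - \bA\bx\|^2 + \|\bPi_{\bA}^{\perp}\by\|^2 \ge \|\bPi_{\bA}^{\perp}\by\|^2$ for every $\bx$, with equality attained whenever $\bA\bx = \bPi_{\bA}\by$ (solvable, since $\bPi_{\bA}\by$ lies in the range of $\bA$). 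Hence $\min_{\bx}\|\by - \bA\bx\| = \|\bPi_{\bA}^{\perp}\by\|$, so $\calF \neq \emptyset$ if and only if $\delta \ge \|\bPi_{\bA}^{\perp}\by\|$, which is precisely the lower bound.

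Next, for the upper bound I would observe that the global unconstrained minimiser of the objective $\|\bx\|^2$ is $\bx = \0$, and that $\0 \in \calF$ if and only if $\|\by\| \le \delta$. Therefore, if $\delta \ge \|\by\|$ the minimiser is the trivial zero vector, whereas if $\delta < \|\by\|$ the point $\0$ is excluded from $\calF$ and any minimiser is necessarily nonzero. Existence of a minimiser throughout the range $\|\bPi_{\bA}^{\perp}\by\| \le \delta < \|\by\|$ then follows from the standard argument that $\calF$ is a nonempty closed convex set while $\|\bx\|^2$ is continuous, strictly convex and coercive, so it attains its minimum on $\calF$ at a unique point. The step requiring the most care is the orthogonal splitting: one must verify that $\bPi_{\bA}\by - \bA\bx$ genuinely lies in the column space of $\bA$ so that the cross term in $\|\by-\bA\bx\|^2$ vanishes; once this is in place both bounds drop out immediately, and --- as a byproduct useful for the closed-form SCQP derived later --- the strict inequality $\delta < \|\by\|$ forces the constraint to be active at the optimum, i.e. $\|\by - \bA\bx^{\star}\| = \delta$.
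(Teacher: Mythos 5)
Your proof is correct and uses essentially the same argument as the paper: the Pythagorean decomposition $\|\by-\bA\bx\|^2 = \|\bPi_{\bA}\by-\bA\bx\|^2 + \|\bPi_{\bA}^{\perp}\by\|^2$ (the paper writes it via an orthonormal basis $\bU$ of the column space, which is the identical split). You go somewhat further than the paper's one-line proof by also establishing attainability of the lower bound, the infeasibility of $\bx=\0$ when $\delta<\|\by\|$ (which the paper only remarks on in the text preceding the lemma), and existence of the minimiser via convexity and coercivity --- all sound additions, but the core idea coincides.
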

\begin{proof}
Let $\bU$ be an orthogonal basis for the column space of $\bA$. Then 
\be
	\delta^2 \ge \|\by - \bA \bx \|^2 = \|\bU^T \by - \bU^T \bA \bx\|_F^2 + \|{\bPi}_{\bA}^{\perp}  \, \by\|^2  \ge \|{\bPi}_{\bA}^{\perp}  \, \by\|^2 \,. \notag 
\ee
\end{proof}

For simplicity, we assume that $\bA$ is full rank matrix, otherwise, we solve the problem with a compressed regressor matrix with a smaller bound 
\be
\min\quad \|\bx\|^2 \quad \text{subject to} \quad \|\hat{\by} - \hat{\bA} \bx\|  \le  \hat{\delta}
\ee
where $\hat{\by} = \bU^T\by$, $\hat{\bA} = \bU^T \bA$, and $\hat{\delta}^2 = \delta^2 - \|{\bPi}_{\bA}^{\perp}  \, \by\|^2$.

We show that the inequality sign in (\ref{eq_linreg_bound}) can be replaced by the equality sign. 
\begin{lemma} \label{lem_linreg_bounderror}
The minimiser to (\ref{eq_linreg_bound}) is the minimiser to the following problem 
\be
\min_{\bx}  \quad & \|\bx\|^2 \quad 
\text{subject to} &   \|\by - \bA \bx \|  =  \delta   \label{eq_linreg_circle}
\ee
\end{lemma}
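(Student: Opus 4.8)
The plan is to show that the inequality constraint must be \emph{active} at any optimum, so that the minimiser automatically lies on the boundary sphere $\|\by - \bA\bx\| = \delta$ and the two problems coincide. I would work under the meaningful range $\|\bPi_{\bA}^{\perp}\,\by\| \le \delta < \|\by\|$ established in the previous lemma, and recall that after the compression step $\bA$ may be taken full rank, so that $\|\bx\|^2$ is strictly convex on the convex feasible set and the minimiser $\bx^{\star}$ of (\ref{eq_linreg_bound}) is unique.

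First I would argue that $\bx^{\star} \neq \0$. Since $\delta < \|\by\|$, the zero vector gives $\|\by - \bA\,\0\| = \|\by\| > \delta$ and is infeasible; hence any feasible point, in particular $\bx^{\star}$, is nonzero, so $\|\bx^{\star}\|^2 > 0$.

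Next I would suppose for contradiction that the constraint is inactive at the optimum, i.e.\ $\|\by - \bA\bx^{\star}\| < \delta$, and consider the scaled path $\bx_t = t\,\bx^{\star}$ for $t \in [0,1)$. The map $t \mapsto \|\by - \bA\bx_t\|$ is continuous and takes a value strictly below $\delta$ at $t = 1$, so by continuity $\|\by - \bA\bx_t\| \le \delta$ still holds for all $t$ in a left-neighbourhood of $1$; each such $\bx_t$ is therefore feasible. But $\|\bx_t\|^2 = t^2\|\bx^{\star}\|^2 < \|\bx^{\star}\|^2$ for $t < 1$, since $\|\bx^{\star}\|^2 > 0$, contradicting the optimality of $\bx^{\star}$. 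Hence $\|\by - \bA\bx^{\star}\| = \delta$.

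Finally I would conclude equivalence. The equality-feasible set $\{\bx : \|\by - \bA\bx\| = \delta\}$ is contained in the inequality-feasible set, and $\bx^{\star}$ has just been shown to lie in it; since $\bx^{\star}$ minimises $\|\bx\|^2$ over the larger set, it a fortiori minimises it over the smaller equality set, and conversely every equality-feasible point is inequality-feasible and so cannot beat $\bx^{\star}$. Thus the unique minimisers of (\ref{eq_linreg_bound}) and (\ref{eq_linreg_circle}) coincide. I expect the only delicate point to be guaranteeing $\bx^{\star} \neq \0$, which is precisely where the hypothesis $\delta < \|\by\|$ is needed: without it the objective could be driven to zero in the interior of the feasible region and the constraint need not be active.
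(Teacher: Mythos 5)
Your proof is correct, but it takes a genuinely different route from the paper. The paper argues via the first-order optimality conditions: it forms the Lagrangian $\calL(\bx,\lambda) = \frac{1}{2}\|\bx\|^2 + \frac{\lambda}{2}(\|\by - \bA\bx\|^2 - \delta^2)$, observes from stationarity $(\bI + \lambda\,\bA^T\bA)\bx = \lambda\,\bA^T\by$ that $\lambda = 0$ would force $\bx = \0$ (excluded since $\delta < \|\by\|$ makes $\0$ infeasible), and then invokes complementary slackness $\lambda(\|\by - \bA\bx\|^2 - \delta^2) = 0$ with $\lambda > 0$ to conclude the constraint is active. You instead prove activity of the constraint by an elementary scaling argument: the path $\bx_t = t\,\bx^{\star}$ stays feasible for $t$ near $1$ if the constraint were inactive, and strictly decreases the objective since $\bx^{\star} \neq \0$. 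Both proofs hinge on the same hypothesis $\delta < \|\by\|$ at the same place (ruling out $\bx^{\star} = \0$), but your argument buys something the paper's does not: it requires no KKT machinery and hence no constraint qualification, so it remains airtight even in the degenerate case $\delta = \|{\bPi}_{\bA}^{\perp}\,\by\|$, where the inequality-feasible set has empty interior, Slater's condition fails, and the existence of a valid multiplier $\lambda$ is not automatic. The paper's approach, conversely, is stylistically economical in context, since the Lagrangian and the multiplier it introduces are exactly the objects reused immediately afterwards in the reparameterization to the spherically constrained QP and its secular equation. One cosmetic slip on your side: the strict convexity of $\|\bx\|^2$ (and hence uniqueness of $\bx^{\star}$ over the convex feasible set) holds irrespective of whether $\bA$ has full column rank, so the appeal to the compression step is unnecessary for that point; in any case uniqueness is not needed for the equivalence you establish, only existence, which follows from coercivity of the objective on the closed nonempty feasible set.
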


%

\begin{proof}
From the Lagrangian of the optimisation in  (\ref{eq_linreg_bound})
\be
\calL(\bx,\lambda) = \frac{1}{2} \|\bx\|^2  +  \frac{\lambda}{2}  (\|\by - \bA \bx\|^2 - \delta^2) \notag 
\ee
the stationarity condition indicates that $\lambda$ must be non-zero, otherwise $\bx = \0$
\be
\nabla_{\bx} \calL =  (\bI + \lambda \, \bA^T \bA) \bx - \lambda \bA^T \by   = \0 \notag .
\ee
From the complementary slackness condition
$
\lambda (\|\by - \bA \bx\|^2 - \delta^2) = 0 $
and since $\lambda >0$, the constraint must hold the equality, i.e., $\|\by - \bA \bx\|  =  \delta$.
\end{proof}
We present an algorithm when the matrix of regressors $\bA$ is of full column rank, $ K \le I$.

Let $\bA = \bU \diag(\bs) \bV^T$ be an SVD of $\bA$, where  $\bV$ is an orthonormal matrix of size $K \times K$, and $\bs = [s_1, \ldots, s_K]> 0$. Hence ${\bPi}_{\bA}^{\perp}  = \bI - \bU \, \bU^T$.

Let $\hat{\by} = \bU^T \by$,  $\hat{\delta} = \sqrt{\delta^2  -   \|{\bPi}_{\bA}^{\perp}  \, \by\|^2}$, 
$\bz = \displaystyle \frac{1}{\hat{\delta}} (\hat{\by} -  \diag(\bs) \bV^T \bx)$, then 
\be
\bx &=& \bV \diag(\bs^{-1})(\hat{\by}- \hat{\delta} \bz)  \\
\|\bx\|_F^2 &=&  (\hat{\by} - \hat{\delta} \bz)^T \diag(\bs^{-2})  (\hat{\by} -\hat{\delta}  \bz) \notag \\
\|\by - \bA \bx \|^2 &=&  \|\bU^T \by - \diag(\bs) \bV^T \bx\|_F^2 + \|{\bPi}_{\bA}^{\perp}  \, \by\|^2  = 
\hat{\delta}^2\, \|\bz\|^2 + \|{\bPi}_{\bA}^{\perp}  \, \by\|^2 \, .\notag 
\ee
By this reparameterization, the problem (\ref{eq_linreg_circle}) becomes a QP over a sphere which can be solved in closed-form, e.g., see \cite{GANDER1989815,Phan_QPS}
\be
\min_{\bz}  \quad &    \bz^T \diag(\hat{\delta} \bs^{-2}) \bz - 2  \, \hat{\by}^T \diag(\bs^{-2}) \bz    \label{eq_linreg_boundz}\\
\text{subject to} &   \bz^T \bz  = 1 \notag.
\ee

\section{A Simplification Method For SCQP with Identical Eigenvalues}\label{sec:scqp_identical_eig}

We consider a QP problem over a sphere 
\be
&\min  \quad  & \frac{1}{2} \, {\tilde\bx}^T \, \diag({\bs})  \, {\tilde\bx} + \bc^T {\tilde\bx}\label{equ_qp_sphere2} \\
&\text{subject to}  \quad & {\tilde\bx}^T {\tilde\bx} = 1 \notag 
\ee
where $\bc^T \bc = 1$, and $\bs = [s_1 = 1 \le s_2 \le \cdots \le s_K]$.

We denote $J$ the number of distinct eigenvalues, $\tilde{\bs} = [\tilde{s}_1 = 1 < \tilde{s}_2 < \cdots < \tilde{s}_{J}]$, over a set of $K$ eigenvalues, $s_k$,  in (\ref{equ_qp_sphere2}), and classify $\bc = [{\bc}_{1}, {\bc}_{2}, \ldots, {\bc}_{J}]$ into $J$ sub-vectors, and  each ${\bc}_{j}$ consists of entries $c_{k}$ such that $s_k = \tilde{s}_j$, i.e., $\bc_j = [c_{k \in \calI_j}]$, where $\calI_j = \{k:  s_k = \tilde{s}_j\}$. 
In addition, we define a vector 
\be
\tilde{\bc}  = [\|\bc_1\|, \|\bc_2\|, \ldots, \|\bc_J\|]\,.
\ee
Then the following relation holds.
\begin{lemma}\label{lem_sqp_simplify}
The minimiser to (\ref{equ_qp_sphere2}) can be deduced from the minimiser to the SCQP with distinct eigenvalues 
\be
&\min  \quad  & \frac{1}{2} \, \bz^T \, \diag(\tilde{\bs} )  \, {\bz} + \tilde\bc^T {\bz}   \quad 
\text{subject to}  \quad   {\bz}^T {\bz} = 1 \notag \,,
\ee
as follows
\begin{itemize}
\item For non zero  $\tilde{c}_j$, 
$
\bx_{\calI_j} = \frac{z_j}{\tilde{c}_j}  \, \bc_j$
\item If $c_1= 0$ and $ d^2 = \displaystyle \sum_{j = 2}^{J} \frac{\tilde{c}_j^2}{(\tilde{s}_j  - 1)^2} \le 1$, 
$\bx_{\calI_1}$ can be arbitrary vectors on the ball $\|\bx_{\calI_1}\|^2 = 1 - d^2$,
\item Otherwise for zeros $\tilde{c}_j$, $\bx_{\calI_j}$ all are zeros. 
\end{itemize}
\end{lemma}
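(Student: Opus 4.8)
The plan is to collapse the $K$-dimensional sphere-constrained QP in (\ref{equ_qp_sphere2}) onto the $J$-dimensional one by first optimising over directions within each eigenspace and then matching the two problems term by term. Grouping the coordinates according to the distinct eigenvalues, the objective splits as
\be
\frac{1}{2}\,\tilde\bx^T \diag(\bs)\, \tilde\bx + \bc^T \tilde\bx
= \sum_{j = 1}^{J} \left( \tfrac{1}{2}\, \tilde s_j\, \|\bx_{\calI_j}\|^2 + \bc_j^T \bx_{\calI_j} \right) \,. \notag
\ee
Because every coordinate in the block $\calI_j$ carries the same eigenvalue $\tilde s_j$, the quadratic part of the $j$-th summand depends on $\bx_{\calI_j}$ only through its norm. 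Hence, for prescribed block norms $\rho_j = \|\bx_{\calI_j}\|$, the only remaining freedom is the direction, and by Cauchy--Schwarz $\bc_j^T \bx_{\calI_j} \ge -\tilde c_j\, \rho_j$, with equality exactly when $\bx_{\calI_j}$ is anti-aligned with $\bc_j$ (whenever $\bc_j \neq \0$).

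With this observation I would establish the correspondence through two matching bounds. First, for \emph{projection}: given any feasible $\tilde\bx$, set $z_j = -\rho_j$; then $\sum_j z_j^2 = \sum_j \rho_j^2 = 1$ is feasible for the reduced SCQP, and Cauchy--Schwarz shows that its objective $\sum_j(\tfrac12 \tilde s_j z_j^2 + \tilde c_j z_j)$ evaluated at this $\bz$ lies at or below the original objective at $\tilde\bx$; since this holds for every feasible $\tilde\bx$, the reduced optimum does not exceed the original one. Second, for \emph{lifting}: given the reduced minimiser $\bz$, define $\bx_{\calI_j} = \frac{z_j}{\tilde c_j}\bc_j$ whenever $\tilde c_j \neq 0$; a direct substitution gives $\|\bx_{\calI_j}\|^2 = z_j^2$ and $\bc_j^T \bx_{\calI_j} = \tilde c_j z_j$, so this lift is feasible and attains exactly the reduced objective value. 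The two bounds force the optima to coincide and certify the lifted point as a minimiser, which is the first bullet. I would also record that at the reduced minimiser every $z_j \le 0$: the standard optimality characterisation gives $z_j = -\tilde c_j/(\tilde s_j + \mu)$ for a multiplier $\mu \ge -\tilde s_1 = -1$, so $\tilde s_j + \mu \ge \tilde s_j - \tilde s_1 \ge 0$ and $\tilde c_j \ge 0$ yield $z_j \le 0$; thus $\rho_j = -z_j$ is a legitimate nonnegative norm and the two maps are mutually inverse.

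The delicate part is the degenerate (``hard'') case $c_1 = 0$, i.e.\ $\bc_1 = \0$, where the lift $\frac{z_j}{\tilde c_j}\bc_j$ degenerates to $0/0$ for $j = 1$. Here the smallest-eigenvalue block contributes the purely quadratic term $\tfrac12 \|\bx_{\calI_1}\|^2$, so the direction of $\bx_{\calI_1}$ is free and only its norm matters. Inspecting the reduced secular equation $\sum_{j \ge 2} \tilde c_j^2/(\tilde s_j + \mu)^2 = 1$, whose left-hand side decreases in $\mu$ and tends to $d^2 = \sum_{j \ge 2}\tilde c_j^2/(\tilde s_j - 1)^2$ as $\mu \downarrow -1$, I would argue that when $d^2 \le 1$ no root $\mu > -1$ exists, so the multiplier is pinned to the boundary $\mu = -\tilde s_1 = -1$. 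The blocks $j \ge 2$ are then fixed at $z_j = -\tilde c_j/(\tilde s_j - 1)$ with $\sum_{j \ge 2} z_j^2 = d^2$, and the unit-norm constraint leaves a residual norm $\|\bx_{\calI_1}\|^2 = 1 - d^2$ to be placed along any direction of the $\calI_1$ eigenspace; this is the second bullet. In all other situations (every block with $\tilde c_j = 0$ and $j \ge 2$, and also $j = 1$ when $d^2 > 1$) one has $\tilde s_j + \mu > 0$, whence stationarity forces $\bx_{\calI_j} = \0$, which is the third bullet. I expect this boundary analysis of the multiplier to be the main obstacle, since it is precisely where the block reparameterisation breaks down and the genuine non-uniqueness of the minimiser must be argued directly rather than lifted from the reduced solution.
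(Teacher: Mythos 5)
Your proof is correct and follows essentially the same route as the paper: you group the coordinates by distinct eigenvalues, anti-align each block with $-\bc_j$ (the paper does this via an inner norm-constrained linear program, you via Cauchy--Schwarz), and collapse the problem to the distinct-eigenvalue SCQP, with your projection/lifting double bound simply making explicit the reparameterization the paper performs. The only substantive difference is that for the degenerate case $c_1 = 0$, $d^2 \le 1$ the paper cites a known fact about SCQP minimisers from \cite{Phan_QPS}, whereas you derive it directly from the boundary behaviour of the secular equation at $\mu = -\tilde{s}_1$; your treatment is sound and slightly more self-contained, but it is the same underlying argument.
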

\begin{proof}
We consider a simple case when some eigenvalues are identical, e.g., $s_1 = s_2 = \cdots = s_L < s_{L+1} < \ldots < s_K$.
If $\bc_{1:L}$ are all zeros, the objective function is independent of $\tilde{\bx}_{1:L} = [\tilde{x}_1, \tilde{x}_2 \ldots, \tilde{x}_L]$, hence, $\tilde{\bx}_{1:L}$ can be any point on the ball $\|\tilde{\bx}_{1:L}\|^2 = d^2 = 1 - \sum_{k = L+1}^{K} \tilde{x}_{k}^2$.
Otherwise, $\tilde{\bx}_{1:L}$ is a minimiser to the constrained linear programming while fixing the other parameters $\tilde{x}_{L+1}, \ldots, \tilde{x}_K$
\be
	\min \quad  \bc_{1:L}^T \, \tilde{\bx}_{1:L} \quad \text{subject to} \quad   \|\tilde{\bx}_{1:L}\| = d  
\ee
which yields 
\be
\tilde{\bx}_{1:L}= \frac{-d}{\|\bc_{1:L}\|}  \bc_{1:L}\,.
\ee
For both cases, we can define  
\be
\bz &=& [-d, \tilde{x}_{L+1}, \ldots, \tilde{x}_{K}] ,\notag \\
\tilde{\bc} &=& [\|\bc_{1:L}\|, c_{L+1}, \ldots, c_{K}], \notag \\
\tilde{\bs} &=& [s_1, s_{L+1}, \ldots, s_K], \notag 
\ee
and perform a reparameterization to estimate $\bz$ from a similar constrained QP but with distinct eigenvalues $\tilde{\bs}$
\be
&\min  \quad  & \frac{1}{2} \, \bz^T \, \diag(\tilde{\bs} )  \, {\bz} + \tilde\bc^T {\bz}\label{equ_qp_sphere3}  \quad 
\text{subject to}  \quad   {\bz}^T {\bz} = 1 \notag \,.
\ee
Similarly, we can convert (\ref{equ_qp_sphere2}) to a problem with  $\tilde{s}_1 < \tilde{s}_2 < \cdots < \tilde{s}_J$. 
Now based on the fact of SCQP that for zero coefficients $\tilde{c}_j$, $z_j^{\star}$ will also be zeros, except for only the case $c_1 = 0$ and $1 \ge d^2$ \cite{Phan_QPS}.
\end{proof}

\section{SCQP with Matrix-variates}\label{sec:sqp_matrixvariate}

We consider an SCQP for a matrix-variate $\bX$ of size $I\times R$ given in the form of 
\be
\min\quad f(\bX) = \frac{1}{2} \, \tr(\bX^T \bQ \bX)  + \tr(\bB^T \bX)  \quad \text{s.t.} \quad \|\bX\|_F^2 = 1 \,\label{eq_spq_matrix} ,
\ee
where $\bQ$ is a psd matrix of size $I \times I$  and $\bB$ is of size $I \times R$.
The objective function can be rewritten in a similar form to  (\ref{equ_qp_sphere2}) as
\be
f(\bX) 
&=&  \frac{1}{2}  {\bx}^T (\diag(\bsigma) \otimes \bI_R)    {\bx}  +  \bv^T  {\bx} \notag 
\ee
where 
${ \bx} = \vtr{\bX^T \bU}$, $\bv = \vtr{\bB^T \bU}$ and $\bQ = \bU \diag(\bsigma)\bU^T$ is an EVD of $\bQ$. Due to the Kronecker product, each eigenvalue $\sigma_i$, $i = 1, \ldots, I$, is replicated $R$ times. 
Let $\bz^{\star}$ of length $I$ be a (unique) minimiser to an SCQP 
\be
\min\quad \frac{1}{2} \, \bz^T  \diag(\bsigma) \bz  +  \bc^T \bz  \quad \text{s.t.} \quad \bz^T \bz = 1 \,\label{eq_spq_matrix_2} ,
\ee
where  $\bc = [c_1, \ldots, c_I]$,  $c_i =  \|\bB^T \bu_i\|$. 
According to Lemma~\ref{lem_sqp_simplify}, 
for a nonzero coefficient $c_i$, $\bx_i =  \frac{z_i}{c_i} \bB^T \bu_i$, otherwise, $\bx_i$ can be any vector on the ball $\bx_i^T \bx_i =  z_i^2$ for a zero vector $\bB^T \bu_i$.
%


\section{Gradient and Hessian of the Objective Function $f(\btheta)$ in (\ref{eq_cp_boundnorm2b})}\label{sec:gradHess_ftheta}

Let $\bbeta_n =[\bu^{(n) T}_{1} \bu^{(n)}_{1}, \ldots, \bu^{(n)T}_{R} \bu^{(n)}_{R}]$
and $\bbeta_{-n} =  \bigcircledast_{k \neq n} \, \bbeta_{n = 1}^{N}$, $\bbeta = \bigcircledast_{n} \, \bbeta_{n}$.
The gradient $\bg_{f}$ and Hessian $\bH_{f}$ of the objective function w.r.t. to $\bU^{(n)}$ are given by 
\be
\bg_{f} &=& \left[\ldots, \vtr{\frac{\partial f}{\partial \bU^{(n)}}}^T,  \ldots  \right]^T \notag \\
&=&   \left[\ldots,  \vtr{\bU^{(n)}  \diag(\bbeta_{-n})}^T ,\ldots \right]^T \\
\bH_f &=& \nabla^{2} f =  \bD + 2 \, \bV\, \bF \, \bV^T
\ee
where $\bF = [\bF_{n,m}]$ is an $N \times N$ partitioned matrix of matrices $\bF_{n,m}$ with $\bF_{n,n} = \0$ and $\bF_{n\neq m} = \diag(\bbeta_{-(n,m)})$, and 
\be
\bD &=& \diag([\bbeta_{-1} \otimes \1_{I_1}, \ldots, \bbeta_{-N} \otimes \1_{I_N}])  \, ,\\
\bV &=& \blkdiag(
\bV_1, \ldots, \bV_N), \quad \bV_n = \blkdiag(\bu^{(n)}_{1}, \ldots, \bu^{(n)}_{R})  . 
\ee
The Hessian $\bH_f$ can also be given in an equivalent form of a block diagonal matrix and a rank-$R$ adjustment 
\be
\bH_f =  \blkdiag(\ldots, \diag(\bbeta_{-n} \otimes \1_{I_n}) - 2 \, \tilde{\bV}_n \, \diag(\bbeta) \, \tilde{\bV}_n^T, \ldots )  + 2\, \tilde{\bV} \, \diag(\bbeta)\, \tilde{\bV}^T \, , \label{eq_Hessian_obj}
\ee
where $\tilde{\bV}_n = \bV_n  \diag(\1\oslash \bbeta_{n})$ and $\tilde{\bV} = [\tilde{\bV}_1^T, \ldots, \tilde{\bV}_N^T]^T$  is of size $R(\sum_{n} I_n) \times R$.

\section{Gradient and Hessian of the Constraint Function $c(\btheta)$ in (\ref{eq_cp_boundnorm2b})}\label{sec::gradHessian_c_theta} 

According to Theorem~2\cite{Phan_fLM},  
the gradient and Hessian of the constraint function $c(\btheta)$ w.r.t $\btheta$ are given by 
\be
\bg_c &=&  \left[\ldots,  \vtr{\bU^{(n)}  \bGamma_{-n} - \bY_{(n)} \left(\bigodot_{k\neq n}  \bU^{(n)}\right)}^T ,\ldots \right]^T  \\
\bH_c &=& \bG + \bZ \bK \bZ^T 
\ee
where  
\be
\bG &=& \blkdiag(\bGamma_{-n} \otimes \bI_{I_n}) \notag\,, \\
\bZ &=& \blkdiag(\ldots, \bI_{R} \otimes \bU^{(n)}, \ldots) \notag \,, \\
\bK &=& [\bK_{n,m}] , \quad \bK_{n,n} = \0, \quad \bK_{n\neq m} = \dvec({\bGamma_{-(n,m)}}) \,\notag .
\ee 
The Hessian $\bH_c$ can also be expressed as  \cite{PetrfLMnr6}
\be
\bH_c = \blkdiag(\bGamma_{-n} \otimes \bI_{I_n} - \tilde{\bZ}_n \, \bPsi  \, \tilde{\bZ}_n^T) + \tilde{\bZ} \, \bPsi \, \tilde{\bZ}^T \, \label{eq_Hessian_cp}
\ee
where $\tilde{\bZ} = [\tilde{\bZ}_n]$, $\tilde{\bZ}_n = (\bI_{R} \otimes \bU^{(n)}) \dvec(\1\oslash {\bGamma_n})$, $\bPsi = \bP_{R,R} \dvec({\bGamma})$.
Note that 
$\tilde{\bV} = \tilde{\bZ}(:,[1, R+1, \ldots, R^2])$ and $\bbeta = \diag(\bGamma)$, $\bbeta_{-n} = \diag(\bGamma_{-n})$.

\comment{
\section{Bound of the Frobenius norm of the Factor Matrices}\label{sec::bound_factormatrix}
We show the relation between the Frobenius norm of the factor matrices and the norm of rank-1 tensors 
\be
\sum_{n=1}^{N} \sum_{r=1}^{R}  \|\bu_{r}^{(n)} \|_2^2  \le N \, R^{\frac{N-1}{N}} \left(  \sum_{r = 1}^R   \prod_{n = 1}^{N}  \, (\bu^{(n) T}_r \, \bu^{(n)}_r)    \right)^{\frac{1}{N}} \label{eq_bound_ell2_norm_rank1tensor} \, .
\ee
Note that we can always normalise the loading components $\bu^{(n)}_r$ to have equal norms, i.e., $\|\bu^{(1)}_r\|^2 = \cdots = \|\bu^{(N)}_r\|^2 = \beta_{r}$. 
With this normalisation 
\be
\sum_{r,n} (\bu^{(n) T}_r \, \bu^{(n)}_r)   &=& N \sum_{r = 1}^{N} \beta_r  \, ,\\
\sum_{r = 1}^R   \prod_{n = 1}^{N}  \, (\bu^{(n) T}_r \, \bu^{(n)}_r)  &=& \sum_{r = 1}^{N} \beta_r^{N} \,.
\ee
The inequality in (\ref{eq_bound_ell2_norm_rank1tensor}) is equivalent to 
\be
\left(\sum_{r}  \beta_r \right)^N \le  R^{N-1} \sum_r \beta_r^N \,.
\ee
Apply the Jensen inequality to the function $f(x) = x^N$, for  $\alpha \in [0, 1]$  we have 
\be
f(\alpha x + (1-\alpha) y) \le \alpha f(x) + (1-\alpha) f(y)  \,.
\ee
Now we replace $x = \beta_R$, $y = \frac{1}{R-1} \sum_{r = 1}^{R-1} \beta_r$ and $\alpha  = \frac{1}{R}$
\be
\frac{1}{R^{N-1}} \left(\beta_R +  \displaystyle\sum_{r = 1 }^{R-1} \beta_r \right)^{N}  
&=&
R \left(\alpha \beta_R +  \frac{1-\alpha}{R-1} \displaystyle\sum_{r = 1}^{R-1} \beta_r \right)^{N} 
\le R \left(\alpha \beta_R^N + \frac{1-\alpha}{(R-1)^{N}}  \, \left(\sum_{r = 1}^{R-1} \beta_r\right)^N \right) \notag \\
&=&  \beta_R^N + \frac{1}{(R-1)^{N-1}}  \, \left(\beta_{R-1} \sum_{r = 1}^{R-2} \beta_r\right)^N  \label{eq_bound_step1}\\
&\le&  \beta_R^N + \beta_{R-1}^N  +  \frac{1}{(R-2)^{N-1}}  \, \left(\sum_{r = 1}^{R-2} \beta_r\right)^N  \label{eq_bound_step2}\\
&\cdots& \notag \\
&\le& \beta_R^N + \beta_{R-1}^N + \cdots + \beta_1^N \,. \notag 
\ee
The inequality in (\ref{eq_bound_step2}) is obtained by applying the result in (\ref{eq_bound_step1}) to $\displaystyle \sum_{r  = 1}^{R-1}  \beta_r$. This completes the proof.
}

\section{Proof of the Identity in (\ref{eq_Prr_identity})}\label{sec_Prr_identity}
Denote by $\overline{i,j} = R(j-1)+i$ the linear index of $(i,j)$, the identity matrix of size $R^2\times R^2$ can be represented as $\bI_{R^2} = \left[\ve_{\overline{1,1}},\ldots,   \ve_{\overline{i,j}}, \ldots,\ve_{\overline{R,R}} \right] $.
From the definition $\vtr{\bX_{R\times R}} = \bP_{R,R} \vtr{\bX_{R\times R}^T}$, it is obvious that 
\be
\bP_{R,R} = \bP_{R,R} \, \bI_{R^2} =  \left[\ldots, \bP_{R,R} \, \ve_{\overline{i,j}}, \ldots \right]  = \left[\ldots,   \ve_{\overline{j,i}}, \ldots \right]  \, .  \notag 
\ee
For an arbitrary matrix $\bA$ of size $R \times R$, we can express the diagonal matrix of $\vtr{\bA}$ as 
\be
\dvec(\bA) &=& \left[ \ldots, a_{i,j} \, \ve_{\overline{i,j}}, \ldots \right] 
=   \left[ \ldots,  a_{i,j}   \bP_{R,R} \, \ve_{\overline{j,i}} , \ldots \right] 
\notag\\
&=&
\left[ \ldots, \bP_{R,R} \, \dvec(\bA^T) \,  \ve_{\overline{j,i}} , \ldots \right] = \bP_{R,R} \, \dvec(\bA^T) \,   \left[\ldots,   \ve_{\overline{j,i}}, \ldots \right]   \notag \\
&=&  \bP_{R,R} \, \dvec(\bA^T) \, \bP_{R,R} \,. \notag.
\ee
Note that $\bP_{R,R}$ is a symmetric matrix, hence, (\ref{eq_Prr_identity}) is obtained straightforwardly. 
 
\bibliographystyle{IEEEbib}
\bibliography{bibligraphy_thesis,BIBTENSORS2016,BIBTENSORS2017}

\end{document}